\newcommand{\cS}{\mathcal{S}}
\newcommand{\f}{\phi}
\newcommand{\ra}{\rightarrow}
\newcommand{\sg}{\sigma}
\DeclareMathOperator{\Int}{Int}
\newcommand{\N}{\mathbb N}
\newcommand{\R}{\mathbb R}
\newcommand{\C}{\mathbb C}
\newcommand{\D}{\mathbb D}
\newcommand{\fs}{\mathcal{S}}
\newcommand{\diam}{\text{diam}}
\newcommand{\cH}{\mathcal{H}}
\newcommand{\om}{\omega}
\numberwithin{equation}{section}
\newtheorem{thm}{Theorem}[section]
\newtheorem{theorem}{Theorem}[section]
\newtheorem{cor}[thm]{Corollary}
\theoremstyle{definition}
\newtheorem{prop}[thm]{Proposition}
\theoremstyle{definition}
\newtheorem{defn}[thm]{Definition}
\theoremstyle{definition}
\theoremstyle{definition}
\newtheorem{remark}[thm]{Remark}
\title{The dimension spectrum of the infinitely generated Apollonian gasket}
\author{Vasileios Chousionis}
\address{Department of Mathematics, University of Connecticut}
\email{vasileios.chousionis@uconn.edu}
\author{Dmitriy Leykekhman}
\address{Department of Mathematics, University of Connecticut}
\email{dmitriy.leykekhman@uconn.edu}
\author{Mariusz Urba\'nski}
\address{Department of Mathematics, University of North TexaS}
\email{mariusz.urbanski@unt.edu}
\author{Erik Wendt}
\address{Department of Mathematics, University of Connecticut}
\email{erik.wendt@uconn.edu}
\thanks{V.~C.\ was supported by Simons Foundation Collaboration grant 521845 and by  NSF grant 2247117. M.U. was supported by Simons Foundation Collaboration grant 581668.}
\date{\today}
\begin{document}
\begin{abstract} We prove that the infinitely generated Apollonian gasket has full Hausdorff dimension spectrum. Our proof, which is computer assisted, relies on an iterative technique introduced by the first three authors in \cite{dimspec} and on a flexible method for rigorously estimating Hausdorff dimensions of limit sets of conformal iterated function systems, which we recently developed in \cite{chousionis2024rigoroushausdorffdimensionestimates}. Another key ingredient in our proof is obtaining reasonably sized distortion constants for the (infinite) Apollonian iterated function system. 
\end{abstract}

\maketitle
\section{Introduction} The Apollonian (or curvilinear Sierpinski) gasket is one of the most well known and widely studied fractals. Beyond its prevalence in fractal geometry and dynamical systems it has important connections to several branches of mathematics such as number theory, hyperbolic geometry and Fuchsian groups, and group theory, see for example the surveys \cite{oh,pollicott}.

The gasket is the residual set of the standard Apollonian circle packing. Starting with three mutually tangent circles with no triple tangencies, we let $T_1$ be the residual curvilinear triangle. A famous theorem of Apollonius of Perga asserts that there are two circles (known as the \textit{inner and outer Soddy circles}) which are tangent to the three original circles. The inner Soddy circle lies in the interstice between the mutually tangent original circles and gives rise to three new residual curvilinear triangles inside $T_1$ whose union is denoted by $T_2$. Continuing iteratively, three new inner Soddy circles can be found inside $T_2$ giving rise to $9$ residual curvilinear triangles inside $T_2$ and so on. The set $\cap_{n=1}^\infty T_n$ is called an Apollonian gasket. Notice that, since the Hausdorff dimension is a conformal invariant (as it is bi-Lipschitz invariant), the Hausdorff dimension of the residual set (the gasket) is independent of the size of the first three circles.

The dimension theory of the Apollonian gasket has attracted considerable attention from various researchers since the 1960s \cite{baifinch,boyd,chousionis2024rigoroushausdorffdimensionestimates,hirst,mcmullen,thomasdar, Vytnova_Wormell_2025}. 
However, several of these contributions were not rigorously justified and until 2024 the best rigorous estimates for the Hausdorff dimension of the Apollonian gasket were due to Boyd \cite{boyd} from the early 1970s. Recently, Vytnova and Wormell \cite{Vytnova_Wormell_2025} obtained rigorous Hausdorff dimension estimates with precision of 128 digits. In \cite{chousionis2024rigoroushausdorffdimensionestimates}, we introduced a general framework for rigorously estimating the Hausdorff dimension of a wide class of conformal fractals—specifically, the limit sets of maximal conformal graph-directed Markov systems—including the Apollonian gasket and its various subsystems. Although our approach yields estimates that are considerably less precise than those in \cite{Vytnova_Wormell_2025}, it is very fast and offers remarkable flexibility for estimating Hausdorff dimensions of subsystems with 4-5 digits of precision. As will be clear from the following presentation, this feature is absolutely essential for the proof of Theorem \ref{mainintro}. 

It is well-known that the Apollonian gasket can be obtained as a limit set of an IFS consisting of three conformal maps. However, these generators are not contracting (only non-expansive), and this is a serious obstacle in analyzing the dimensional and geometric properties of the Apollonian gasket. The recent advances \cite{chousionis2024rigoroushausdorffdimensionestimates, Vytnova_Wormell_2025} rely on the fact that the Apollonian gasket can also be viewed as the limit set of an infinitely generated conformal iterated function system (CIFS) consisting of uniformly contracting maps. More precisely, there exists a CIFS $\mathcal{A}$ (see the beginning of Section \ref{sec:bddist}) whose limit set $J_{\mathcal{A}}$ is equal to the Apollonian gasket minus the countable set of its cusp points. This was realized by Mauldin and Urba\'nski in \cite{MUapol} and it was used to verify that the Apollonian gasket has positive and finite $h$-Hausdorff measure and infinite $h$-packing measure, where $h=\dim_{H}(J_{\mathcal{A}})=1.30568\dots$ is the Hausdorff dimension of the Apollonian gasket. 

We contribute to the dimension theory of the Apollonian gasket by determining its (Hausdorff) dimension spectrum. We first recall how the dimension spectrum of IFSs is defined. Let $X \subset \R^n$ be a compact set and let $\cS=\{\f_i:X \ra X\}_{i \in I}$ be a countable collection of uniformly contracting maps. If $A \subset I$ we denote the corresponding subsystem of $\cS$ by $\cS_A=\{\f_i:X \ra X\}_{i \in A}$ and we denote its limit set by $J_A$. The {\em dimension spectrum} of $\cS$ is the set of all possible values of the Hausdorff dimension of the subsystems of $\cS$, i.e.  
$$
DS(\cS):=\{\dim_{H}(J_{A}): A \subset I\}.
$$
If $\cS$ consists of finitely many maps, then $DS(\cS)$ is a finite set. However, in the case when $\cS$ is infinite, the structure of the dimension spectrum becomes significantly richer and more intricate.

The dimension spectrum was first studied by Mauldin and Urba\'nski in \cite{MUCIFS} where they showed that if $\cS$ is a CIFS then $[0,\theta) \subset DS(\cS)$ where $\theta$ is the finiteness parameter of the system, see \eqref{eq:theta}. Naturally, one is interested in systems whose dimension spectrum is an interval; i.e. systems which satisfy $DS(\cS)=[0, \dim_{H}(J_{\cS})]$. In that case, we say that $\cS$ has \textit{full dimension spectrum}. In a significant contribution, Kesseb\"ohmer and Zhu \cite{KZ} proved that the dimension spectrum of the CIFS resulting from the real continued fractions algorithm is full, thus resolving the so-called Texan conjecture. The result in \cite{KZ} can be restated as follows; given any $t \in [0,1]$, there exists $E_t \subset \N$ such that the set of irrational numbers whose continued fraction expansion only contains digits from $E_t$ has Hausdorff dimension $t$.

More recently, the first three authors introduced a rigorous computational approach in \cite{dimspec,ifscont}, drawing partial inspiration from the methods developed in \cite{KZ}, to study the dimension spectrum of various conformal fractals associated with continued fractions. In particular, \cite{ifscont} established that the conformal iterated function systems (CIFS) arising from real continued fractions—when restricted to alphabets such as the set of prime numbers, perfect squares, or any arithmetic progression—possess full dimension spectrum. Additionally, \cite{dimspec} demonstrated that the standard CIFS derived from complex continued fractions also has full dimension spectrum. We note that the dimension spectrum of infinite IFSs can be very different than an interval as it may have many non-degenerate connected components and connected components being singletons. Such examples, can be found in \cite{ifscont, DSspec,KZ}. However, it was proved in  \cite{dimspec} that in the case of conformal iterated function systems the dimension spectrum is always compact and perfect. Interestingly, this is not always the case for affine IFSs, see \cite{jurga}.

In this paper we combine methods from \cite{dimspec} together with our recent computational approach from \cite{chousionis2024rigoroushausdorffdimensionestimates} to prove the following theorem.
\begin{thm} 
\label{mainintro} The dimension spectrum of the infinitely generated Apollonian gasket is full, i.e.
$$DS(\mathcal{A})=[0, \dim_{H}(J_{\mathcal{A}})].$$
\end{thm}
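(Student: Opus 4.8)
The plan is to prove that $DS(\mathcal{A})$ is dense in $[0,\dim_H(J_{\mathcal{A}})]$ and then upgrade density to equality using compactness. Write $h=\dim_H(J_{\mathcal{A}})$ and let $I$ be the index set of $\mathcal{A}$. Every subsystem satisfies $J_A\subseteq J_{\mathcal{A}}$, so $DS(\mathcal{A})\subseteq[0,h]$, and taking $A=I$ shows $h\in DS(\mathcal{A})$. By the theorem of Mauldin--Urba\'nski \cite{MUCIFS} we already have $[0,\theta)\subseteq DS(\mathcal{A})$, where $\theta$ is the finiteness parameter (see \eqref{eq:theta}); since $DS(\mathcal{A})$ is compact \cite{dimspec}, in fact $[0,\theta]\subseteq DS(\mathcal{A})$. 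Thus the entire problem reduces to showing that $DS(\mathcal{A})$ is dense in $[\theta,h]$: a closed subset of $[0,h]$ that contains $[0,\theta]$ and is dense in $[\theta,h]$ must equal $[0,h]$.

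To generate a fine mesh of dimensions inside $[\theta,h]$ I would exploit the two-scale structure of the generators. Order the maps of $\mathcal{A}$ by their contraction ratios: a few \emph{large} generators (the big Soddy circles) carry most of the dimension, while infinitely many \emph{small} generators each contribute very little but accumulate to push the tail dimension down to $\theta$. For a finite head $F$ of large generators and a cutoff $N$, set $A_{F,N}=F\cup\{i\in I:\ \|\phi_i'\|\le \rho_N\}$ with $\rho_N\downarrow 0$. Increasing $N$ deletes one small generator at a time, and the central technical step is a \emph{small-jump lemma}: using Bowen's formula (the dimension $s_A$ is the zero of the pressure $P_A$) together with the bounded distortion property, the removal of a single generator $\phi_i$ changes $s_A$ by an amount controlled by $\|\phi_i'\|^{s_A}$ divided by a lower bound for $|P_A'|$. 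Consequently, once the cutoff $\rho_N$ is small the jumps $\dim_H(J_{A_{F,N}})-\dim_H(J_{A_{F,N+1}})$ are uniformly smaller than any prescribed $\epsilon$, so for each fixed head $F$ the values $\dim_H(J_{A_{F,N}})$ form an $\epsilon$-net of the interval from $\max(\dim_H(J_F),\theta)$ up to $\dim_H(J_{A_{F,N_0(F,\epsilon)}})$.

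Because the largest generators produce jumps that are \emph{not} small, no single head $F$ sweeps all of $[\theta,h]$; the role of the iterative technique of \cite{dimspec} is to select a finite increasing family of heads $F_1\subset F_2\subset\cdots\subset F_k$ whose associated $\epsilon$-net intervals overlap and together cover $[\theta,h]$. Taking $F_k$ large makes $\dim_H(J_{F_k})$ close to $h$, so the top intervals reach $h$; taking $F_1$ small lets the corresponding family descend to $\theta$; and the intermediate heads bridge the range. The crucial point is that verifying the overlaps is a \emph{finite} task: it amounts to checking finitely many inequalities between the Hausdorff dimensions of explicit subsystems $A_{F_j,N}$, which is exactly what the rigorous, interval-arithmetic dimension estimator of \cite{chousionis2024rigoroushausdorffdimensionestimates} is designed to do. Its $4$--$5$ digit precision and its flexibility in handling arbitrary subsystems are what make this certification feasible, so the numerics enter only to confirm that consecutive nets meet, never to establish the interval structure itself.

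The main obstacle, and the reason the proof is delicate, is quantitative control of distortion. The small-jump lemma and the numerical dimension bounds are both only as good as the bounded distortion constants of the infinite Apollonian IFS, whose generators are M\"obius maps on a curvilinear triangle; loose constants would simultaneously weaken the jump estimate and degrade the estimator below the precision needed to certify the overlaps, breaking the covering. Thus the heart of the argument is to obtain \emph{reasonably sized} distortion constants for $\mathcal{A}$, and then to organize the iteration so that $[\theta,h]$ is covered by the nets of finitely many heads, with each net and each overlap validated by a single rigorous computation. Once this is in place, density of $DS(\mathcal{A})$ in $[\theta,h]$ follows, and with the reduction above this yields Theorem \ref{mainintro}.
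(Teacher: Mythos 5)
Your reduction is sound as far as it goes: since $DS(\mathcal{A})$ is compact by \cite{dimspec} and contains $[0,\theta)$ with $\theta=\theta(\mathcal{A})=1/2$ by \cite{MUCIFS}, it would indeed suffice to prove that $DS(\mathcal{A})$ is dense in $[\theta,\dim_H(J_{\mathcal{A}})]$; and your overall architecture (ordering generators by contraction ratio, heads plus tails, explicit distortion constants, rigorous computer-assisted dimension estimates) is the same as the paper's. The genuine gap is in the mechanism you propose for producing density. For a fixed head $F$, your net interval is $\bigl[\max(\dim_H(J_F),\theta),\,\dim_H(J_{A_{F,N_0(F,\epsilon)}})\bigr]$, and $N_0(F,\epsilon)\to\infty$ as $\epsilon\to 0$; since the tail $\{i:\|\phi_i'\|_\infty\le\rho_{N}\}$ contributes vanishingly little pressure as $N\to\infty$, the right endpoint $\dim_H(J_{A_{F,N_0(F,\epsilon)}})$ decreases to $\dim_H(J_F)$. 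So the net intervals of any \emph{fixed} finite family of heads shrink to the points $\dim_H(J_{F_j})$ as $\epsilon\to 0$, gaps open between consecutive heads, and the covering of $[\theta,h]$ fails for all sufficiently small $\epsilon$. To maintain the covering you would need a number of heads growing without bound as $\epsilon\to 0$, hence infinitely many numerical certifications in total; worse, an estimator with fixed $4$--$5$ digit precision can never certify overlaps at scales below its precision. Your scheme therefore yields only $\epsilon_0$-density for some fixed $\epsilon_0>0$, and a closed set that is $\epsilon_0$-dense in $[\theta,h]$ need not contain $[\theta,h]$. (Your ``small-jump lemma'' is also a nontrivial pressure-perturbation statement requiring proof, but that is a secondary issue.)

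What is missing is precisely the exact-attainment mechanism of \cite[Proposition 6.17]{dimspec}, restated as Proposition \ref{keypropoint}: under the tail-abundance condition
$$\sum_{n \geq k+1} \| \f_{i_n}'\|_\infty^{t_2} \geq K^{2t_2} \|\f_{i_k}'\|^{t_2}_\infty \quad \mbox{for all } k \geq d+1,$$
which is a single inequality checkable once and valid at \emph{all} scales (it says the remaining tail always has enough mass, up to distortion, to close any deficit), a greedy construction hits every value $t\in[t_1,t_2]$ \emph{exactly}, so each finite numerical verification certifies an entire nondegenerate interval of the spectrum rather than an $\epsilon$-net. That is what makes the paper's proof finite: eighteen applications of Corollary \ref{usefulcoro} (Table \ref{tab:main}), using the distortion bounds of Proposition \ref{distprop}, produce overlapping genuine intervals covering $[1/2,\dim_H(J_{\mathcal{A}})]$, which combined with $[0,1/2)\subset DS(\mathcal{A})$ proves Theorem \ref{mainintro}. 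Small-jump continuity estimates alone cannot substitute for this condition; if you want to pursue your route, you must replace the $\epsilon$-net step by an argument that attains target dimensions exactly, and at that point you will have reproved Proposition \ref{keypropoint}.
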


We will now discuss briefly the proof of Theorem \ref{mainintro}. Our starting point is \cite[Proposition 6.17]{dimspec}. Given any CIFS $\mathcal{S}=\{\phi_i:X \to X\}_{i \in I}$, a well ordering $\prec$ on the alphabet $I$ is called {\em natural} (with respect to $\cS$) if
$$a \prec b \implies \| \f_a'\|_\infty \geq \| \f_b'\|_\infty.$$
If $I=\{i_n\}_{n \in \N}$ is an enumeration respecting $\prec$, then we denote the $m$-initial segments by $I(m)=\{i_1,\dots,i_m\}.$
According to \cite[Proposition 6.17]{dimspec} if $K$ is a distortion bound for $\cS$ (see \eqref{bdp}) and there exist $t_1 \leq  t_2 \leq \dim_{H}(J_{\cS})$ and $d \in \N$ such that 
\begin{enumerate} 
\item \label{cond1int}$\dim J_{I_d} \geq t_1$,
\item \label{cond2int} $\sum_{n \geq k+1} \| \f_{i_n}'\|_\infty^{t_2} \geq K^{2t_2} \|\f_{i_k}'\|^{t_2}_\infty$ for all $k \geq d+1$.
\end{enumerate}
Then $[t_1,t_2] \subset DS(\cS)$.

If one wants to apply the previous scheme in order to find intervals inside the spectrum of the Apollonian system $\mathcal{A}$, first has to find a computationally friendly distortion constant $K_{\mathcal{A}}$. We deal with this problem in Section \ref{sec:bddist} and we prove in Proposition \ref{distprop} that 
$$
K_{\mathcal{A}} \leq 5.900319.
$$ 
We consider that our distortion bound is of independent interest and likely will be useful for other related problems. Moreover, motivated by our bound, one can naturally ask what is the best distortion constant for the Apollonian system $\mathcal{A}$. We note that although the proof of Proposition \ref{distprop} is quite technical, it only uses elementary tools.

With a reasonably sized distortion constant at hand,  we use an iterative bootstrapping argument in Section \ref{sec:proof}, similar to that from \cite{dimspec}, to identify a chain of $18$ overlapping intervals contained in $DS(\mathcal{A})$ and containing $[1/2, \dim_{H}(J_{\mathcal{A}}]$. This is enough because by \cite{MUapol, MUCIFS} we know that $[0,1/2) \subset DS (\mathcal{A})$. Here is exactly where our flexible approach from \cite{chousionis2024rigoroushausdorffdimensionestimates}
comes very handy. Using the method developed in \cite{chousionis2024rigoroushausdorffdimensionestimates} we are able to rigorously estimate upper and lower bounds for the Hausdorff dimension of various Apollonian subsystems  (see Tables \ref{tab:updimest}) with sufficient accuracy for our bootstrapping argument to work.
All $18$ steps are summarized in Table  \ref{tab:main}.
We would like to point out that obtaining a good upper bound for the distortion constant of $\mathcal{A}$ is essential; it allows us to take the first crucial step which produces a small interval in the spectrum that includes $\dim_{H} (J_{\mathcal{A}})$.

The paper is structured as follows. Section \ref{sec:prelim} introduces the necessary background for conformal iterated function systems and their thermodynamic formalism. In Section \ref{sec:bddist}, which is the main technical part of our paper, we obtain an estimate for the distortion constant of the Apollonian system. Finally, in Section \ref{sec:proof} we prove Theorem \ref{mainintro}.

\section{Preliminaries}
\label{sec:prelim} We start by introducing some necessary background on conformal iterated function systems. Let $X$ be a compact metric space. An iterated function system (IFS) on $X$ is a countable family $\cS=\{\phi_{i}:X \to X\}_{i \in I}$ of injective contractions whose Lipschitz constant is less than $s$ for some $s \in (0,1)$. 

If $\om \in I^*:=\bigcup_{n=0}^\infty I^n$, we denote by $|\om|$  the unique integer
$n \geq 0$ such that $\om \in I^n$, and we call $|\om|$ the {\em length} of
$\om$. If $\om \in I^\N$ or $|\om|>n$ we let $
\om |_n:=\om_1\ldots \om_n\in I^n.
$ Given $\om \in I^{\mathbb N}$ we denote
$\phi_{\om|_n}=\phi_{\om_1} \circ \dots \phi_{\om_n}.$ Note that the non-empty compact sets
$\{\f_{\om|_n}(X)\}_{n=1}^\infty$ form a decreasing sequence with
nonempty intersection.
Since $\diam(\f_{\om|_n}(X)) \le s^n\diam(X), \forall n \in \N,$ we can define the coding map $\pi:I^{\mathbb N}_A\to X$ given by
$$
\pi(\om):=\bigcap_{n\in  \N}\f_{\om|_n}(X).
$$
The set
$
J=J_\cS:=\pi(I^{\mathbb N})
$
will be called the {\it limit set} \index{limit set} (or {\it attractor}) \index{attractor} of the IFS $\cS$.

We will be interested in conformal IFSs (CIFS) in the complex plane. 
\begin{defn}
\label{cifs}
Let $X \subset \C$ be compact and connected. An IFS $\cS=\{\phi_i:X \to X\}_{i \in I}$ is called {\it conformal}  if the following conditions are satisfied.
\begin{enumerate}
\item \label{cgdmsiii} There exists an open and connected
set $W \supset X$ such that the maps
$\f_i, i \in I,$ extend to a $C^1$ conformal diffeomorphisms of $W$ into $W$.
\item ({\it Open Set Condition} or {\it OSC}). \index{open set condition} For all $i,j\in
I$, $i\ne j$,
$$
\phi_i(\Int(X)) \cap \phi_j(\Int(X))= \emptyset.
$$
\end{enumerate} 
\end{defn}
It follows by \cite{Mauldin_Urbanski_2003, KU} that CIFSs in the complex plane satisfy the \textit{Bounded Distortion Property (BDP)}: there exists some constant $K \geq 1$, depending only on $\cS$, such that
\begin{equation}
\label{bdp}
K^{-1}\le\frac{|\f'_\om(p)|}{|\f'_\om(q)|}\le K
\end{equation}
for every $\om\in I^*$ and every pair of points $p,q\in X$. If $K$ is as in \eqref{bdp} it is called a \textit{distortion constant of $\mathcal{S}$}. Given $\om \in I^\N$ we let
$$\|\phi'_{\om}\|_\infty:=\max_{x \in X}|\phi'_{\om}(x)|.$$ The Leibniz rule and \eqref{bdp} imply
that if $\om \in I^\ast$ and $\om=\tau \upsilon$ for some $\tau, \upsilon \in I^\ast$, then
\begin{equation}\label{quasi-multiplicativity1}
K^{-1} \|\f'_{\tau}\|_\infty \, \|\f'_{\upsilon}\|_\infty \le
\|\f'_\om\|_\infty \le \|\f'_{\tau}\|_\infty \,
\|\f'_{\upsilon}\|_\infty.
\end{equation}

We will now briefly recall some facts from the thermodynamic formalism of CIFSs, see e.g. \cite{MUCIFS, Mauldin_Urbanski_2003, MRU} for more information. Let $\mathcal{S}=\{\f_i\}_{i\in I}$  be a CIFS. For $t\ge 0$ and $n \in \N$ let
\begin{equation}\label{zn}
Z_n(\cS,t):=Z_{n}(t) := \sum_{\om\in I^n} \|\phi'_\om\|^t_\infty.
\end{equation}
Since, by \eqref{quasi-multiplicativity1},
\begin{equation}
\label{zmn}
Z_{m+n}(t)\le Z_m(t)Z_n(t),
\end{equation}
the sequence $\{\log Z_n(t)\}_{n=1}^\infty$ is subadditive. Thus, the limit
$$
P_\mathcal{S}(t):=P(t):=\lim_{n \to  \infty}  \frac{ \log Z_n(t)}{n}=\inf_{n \in \N} \frac{ \log Z_n(t)}{n}
$$ 
exists and it 
is called the {\em topological pressure} of the system $\mathcal{S}$ evaluated at the parameter $t$. We record that $t \mapsto P(t)$  is a decreasing function on $[0,+\infty)$ with $\lim _{t \ra +\infty} P(t)= -\infty$. Moreover, $P$ is convex and continuous on $\overline{\{t \geq 0: P(t)<\infty\}}$, see e.g. \cite[Proposition 19.4.6]{MRU}. We also let 
\begin{equation}
\label{eq:theta}
\theta(\cS):=\theta=\inf\{t \geq 0: P(t) <+\infty\}\quad \mbox { and }\quad h(\cS):=h=\inf\{t \geq 0: P(t)\leq 0\}.
\end{equation}
It is well known, see e.g. \cite[Proposition 19.4.4]{MRU}, that
\begin{equation}
\label{eq:thetaz1}
\theta=\inf\{t \geq 0: Z_1(t) <+\infty\}.
\end{equation}

The parameter $h(\cS)$ is known as \textit{Bowen's parameter} and it coincides with the Hausdorff dimension of the limit set $J_{\cS}$:
\begin{thm}\cite{MUCIFS}
\label{721}
If $\mathcal{S}$ is a CIFS, then
$$
h(\fs)= \dim_H(J_\mathcal{S})
= \sup \{\dim_\cH(J_F):  \, F \subset E \, \mbox{finite} \, \}.
$$
\end{thm}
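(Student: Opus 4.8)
The plan is to establish the chain
$$\sup_{F}\dim_{H}(J_F)\;\le\;\dim_{H}(J_\cS)\;\le\;h(\cS)\;\le\;\sup_{F}\dim_{H}(J_F),$$
where $F$ ranges over the finite subsets of the alphabet $I$ and $J_F$ is the limit set of the subsystem $\cS_F$; once the three inequalities are in place, every quantity in the statement must coincide. The leftmost inequality is free: $\cS_F$ is a subsystem of $\cS$, so $J_F\subset J_\cS$, and Hausdorff dimension is monotone under inclusion.

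For the middle inequality I would cover $J_\cS$ by cylinders. Fix $t>h$, so that $P(t)<0$ and, since $P(t)$ is the exponential growth rate of $Z_n(t)$, one has $Z_n(t)\to 0$. The conformal mean value inequality together with the bounded distortion property \eqref{bdp} yields $\diam(\f_\om(X))\le C\,\|\f'_\om\|_\infty$ for a uniform constant $C$, so $\{\f_\om(X)\}_{\om\in I^n}$ is a cover of $J_\cS$ of mesh at most $s^n\diam(X)\to 0$ satisfying
$$\sum_{\om\in I^n}\diam(\f_\om(X))^t\;\le\;C^t\,Z_n(t)\;\longrightarrow\;0.$$
Hence the $t$-dimensional Hausdorff measure of $J_\cS$ vanishes and $\dim_{H}(J_\cS)\le t$; letting $t\downarrow h$ gives $\dim_{H}(J_\cS)\le h$.

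The heart of the matter is the rightmost inequality, which I would split into two finite-approximation facts. First, for each finite $F$ the symbolic space $F^{\N}$ is compact, so the classical Ruelle--Perron--Frobenius theory supplies a conformal measure at the unique zero $h_F$ of the finite pressure $P_{\cS_F}$, and the mass distribution principle combined with \eqref{bdp} gives the finite Bowen formula $\dim_{H}(J_F)=h_F$. Second, I would prove the pressure-approximation identity $P_\cS(t)=\sup_F P_{\cS_F}(t)$: the inequality $\ge$ is monotonicity of the partition sums, while $\le$ follows because, for $t>\theta$, one has $Z_n^F(t)\uparrow Z_n(t)$ as $F\nearrow I$ by monotone convergence, which must then be exchanged with the limit defining the pressure. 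Granting the identity, fix any $t<h$, so that $0<P_\cS(t)=\sup_F P_{\cS_F}(t)$; then $P_{\cS_F}(t)>0$ for some finite $F$, and since $P_{\cS_F}$ is decreasing with zero $h_F$ this forces $t<h_F$. Letting $t\uparrow h$ gives $\sup_F h_F\ge h$, and therefore $\sup_F\dim_{H}(J_F)=\sup_F h_F\ge h$.

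The main obstacle is the pressure-approximation identity, and specifically the direction $P_\cS(t)\le\sup_F P_{\cS_F}(t)$. The difficulty is that the two limiting processes---the $n\to\infty$ limit defining the pressure and the exhaustion $F\nearrow I$ of the countable alphabet---must be interchanged, whereas a priori one only has the elementary pointwise bound $P_{\cS_F}(t)\le P_\cS(t)$ at each level. Making this rigorous requires quantitative control, via \eqref{quasi-multiplicativity1} and the distortion constant, of the tail contribution to $Z_n(t)$ coming from words that use digits outside $F$; this is exactly the place where the countability of $I$ (rather than finiteness) makes the lower bound for $\dim_{H}(J_\cS)$ genuinely more delicate than in the classical finite setting.
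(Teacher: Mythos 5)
The paper itself does not prove Theorem \ref{721}; it is imported wholesale from \cite{MUCIFS}, so your proposal has to stand on its own. Your skeleton is the right one (and is in fact the Mauldin--Urba\'nski strategy): the chain $\sup_F\dim_{H}(J_F)\le\dim_{H}(J_\cS)\le h(\cS)\le\sup_F\dim_{H}(J_F)$, with the first inequality free and the second proved by covering with $n$-cylinders. (Minor point there: you assert $P(t)<0$ for $t>h$, which needs the uniform contraction, e.g.\ $P(t)\le P(h')+(t-h')\log s<0$ for any $h<h'<t$ with $P(h')\le 0$; otherwise $Z_n(t)\to 0$ is not guaranteed.) The genuine gap is in the third inequality, at exactly the spot you flag yourself: the inequality $P_\cS(t)\le\sup_F P_{\cS_F}(t)$ is announced as ``the main obstacle'' and then never proved. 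Naming the obstacle is not overcoming it, and this identity carries the entire content of the theorem --- without it you have shown nothing beyond the easy directions.

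Moreover, the route you gesture at (controlling ``the tail contribution to $Z_n(t)$ coming from words that use digits outside $F$'') is not the mechanism that works: splitting $Z_n(t)$ according to which letters appear produces cross terms of size roughly $n\,Z_1(t)^{n-1}\sum_{j\notin F}\|\f_j'\|_\infty^t$, which are painful to beat uniformly in $n$. The standard fix needs no estimate on mixed words at all. By \eqref{quasi-multiplicativity1}, the finite subsystem satisfies $Z^F_{m+n}(t)\ge K^{-t}Z^F_m(t)\,Z^F_n(t)$, so $m\mapsto\log\bigl(K^{-t}Z^F_m(t)\bigr)$ is superadditive and Fekete's lemma gives
$$
P_{\cS_F}(t)=\sup_{m\in\N}\frac{1}{m}\log\bigl(K^{-t}Z^F_m(t)\bigr)\;\ge\;\frac{1}{n}\log Z^F_n(t)-\frac{t\log K}{n}
\qquad\text{for every }n\in\N .
$$
This is what converts an estimate at a \emph{single} word length into an estimate on the pressure, resolving the $\sup$-$\inf$ versus $\inf$-$\sup$ problem you correctly identified. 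The conclusion is then immediate: given $\epsilon>0$, pick $n$ with $t\log K/n<\epsilon/2$; since $P_\cS(t)=\inf_n\frac{1}{n}\log Z_n(t)\le\frac{1}{n}\log Z_n(t)$, monotone convergence at this one fixed $n$ produces a finite $F$ with $\frac{1}{n}\log Z^F_n(t)\ge\frac{1}{n}\log Z_n(t)-\epsilon/2\ge P_\cS(t)-\epsilon/2$ (if $Z_n(t)=\infty$, instead make $\frac{1}{n}\log Z^F_n(t)$ arbitrarily large), whence $P_{\cS_F}(t)\ge P_\cS(t)-\epsilon$. With this lemma supplied, the remainder of your argument --- the finite-alphabet Bowen formula for $\cS_F$ and letting $t\uparrow h$ --- goes through as you wrote it.
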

We note that Theorem \ref{721} has been generalized to various settings, see e.g. \cite[Theorem 4.2.13]{Mauldin_Urbanski_2003} or \cite[Theorem 7.19]{CTU}.

\section{Effective bounds of the Apollonian gasket's distortion}\label{sec:bddist}
From now on we will focus on the Apollonian gasket. We will first introduce an infinite CIFS whose limit set coincides with the Apollonian gasket minus the countable set of its cusp points. 

 We consider the angles
\[ \theta_k = (-1)^k \frac{2 \pi}{3 } \quad\text{ and }\quad \theta_k' = \frac{2 \pi}{3}k \mod 2 \pi, \quad k=1,\dots, 6,\] and the maps
\begin{align*}
    &f(z) = \frac{(\sqrt{3} -1)z +1}{-z + \sqrt{3}+1}, \ R_{\theta_k}, \text{ and } R_{\theta_k'},
\end{align*}
where $R_\theta$ is the standard complex rotation by the angle $\theta$. Let $\D=B(0,1)$ be the unit disk in the complex plane and let
$I=\{1,2, \ldots, 6 \} \times \N$. The Apollonian IFS is then defined as
\begin{equation}
\label{eq:apolsystem}
\mathcal{A}=\{\phi_{k,n}:\D \to \D\}_{(k,n) \in I}
\quad\mbox{ where }\quad
\phi_{k,n} = R_{\theta_k'} \circ f^n \circ R_{\theta_k} \circ f.\end{equation}
The first two iterations of the maps are displayed in Figures \ref{fig: 2D 1iteration} and \ref{fig: 2D 2iteration}.

\begin{figure}[h]
  \hfill
   \begin{minipage}[t]{.45\textwidth}
    \begin{center}
 \includegraphics[scale=.17]{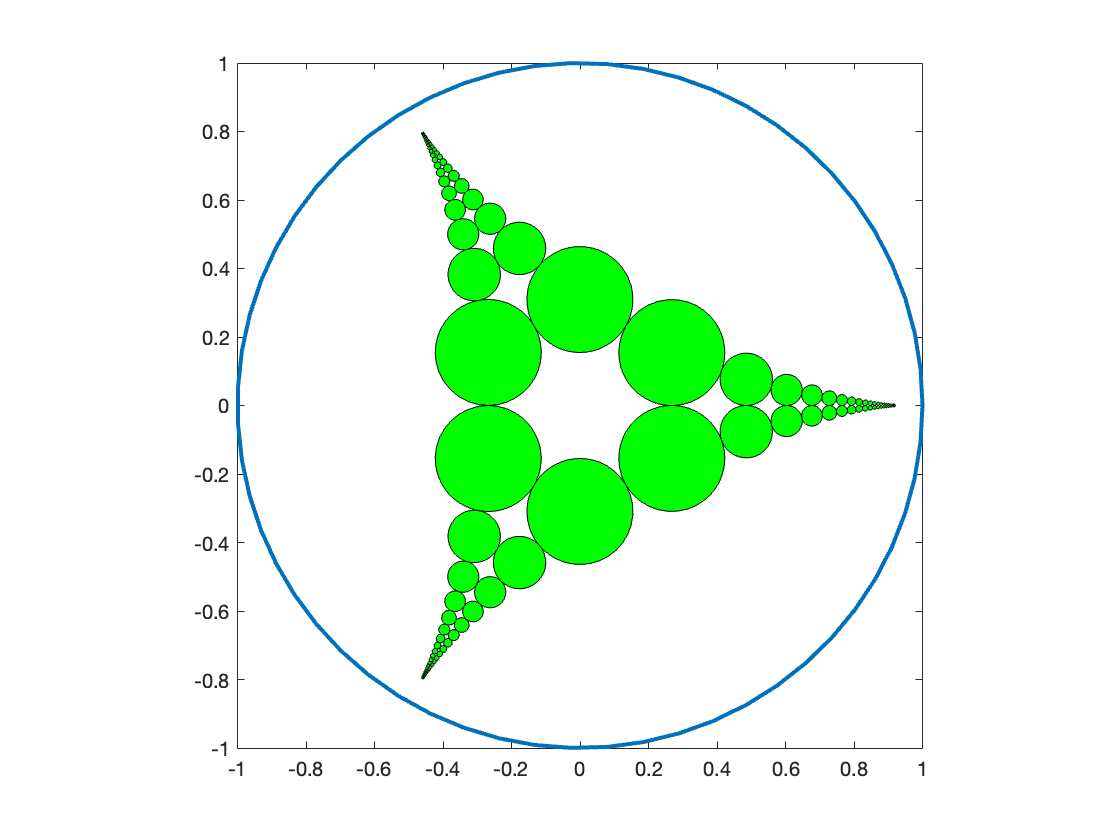}\\
  \caption{1 iteration}
        \label{fig: 2D 1iteration}
    \end{center}
  \end{minipage}
  \hfill
  \begin{minipage}[t]{.45\textwidth}
    \begin{center}
\includegraphics[scale=.17]{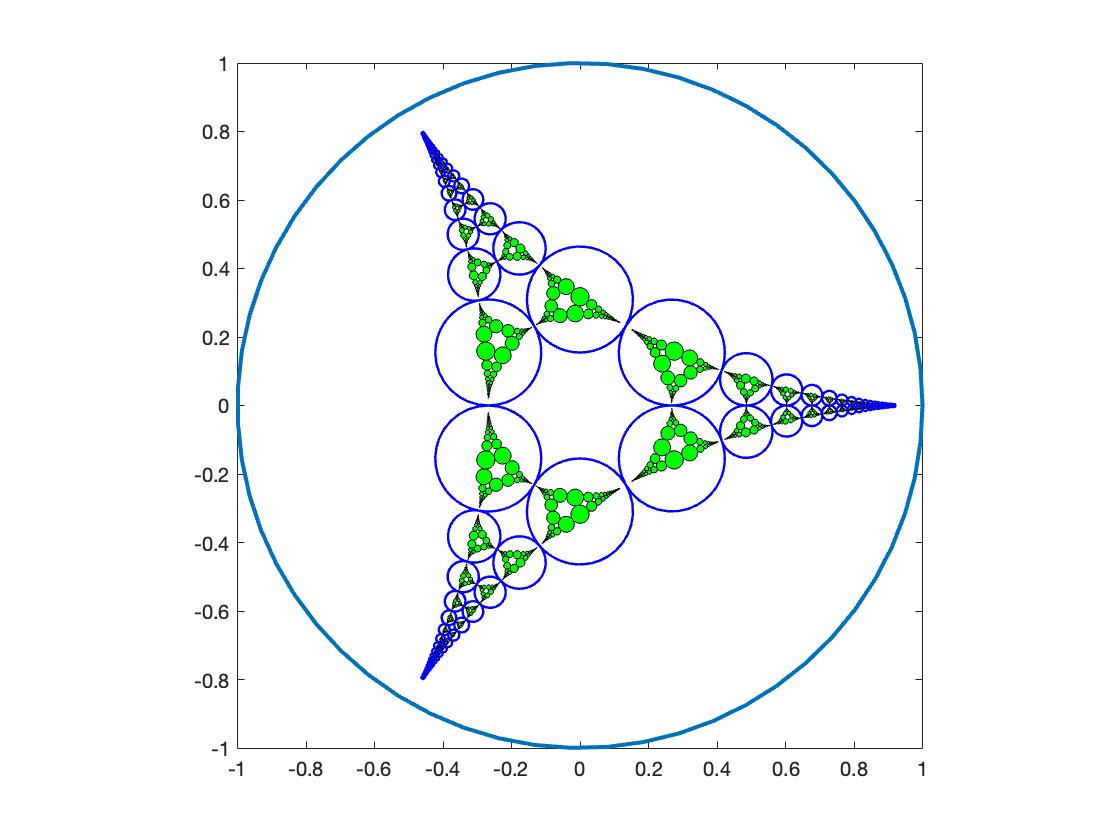}\\
   \caption{2 iterations}
        \label{fig: 2D 2iteration}
          \end{center}
  \end{minipage}
  \hfill
\end{figure}

It was proved in \cite{MUapol} that $\mathcal{A}$ is a CIFS; it satisfies the conditions from Definition \ref{cifs}. We also note that the set $W$ (as it appears in Definition \ref{cifs} \label{cifsw}) it can be taken to be $B(0,1+\sqrt{3})$, i.e.
\begin{equation}
\label{confext}
\phi_{k,n} (B(0,1+\sqrt{3})) \subset B(0,1+\sqrt{3}),
\end{equation}
see \cite[Proposition 6.7]{chousionis2024rigoroushausdorffdimensionestimates}. We will also need to consider certain subsystems of $\mathcal{A}$. If $F\subset \N$ we will let
\begin{equation}
\label{eq:subsystemsofA}
\mathcal{A}_F=\{\phi_{k,n}:\D \to \D\}: (k,n) \in I_F\},\end{equation}
where
$$I_F=\{(k,n): k \in{1,\dots,6}, \quad n \in F\}.
$$

From now on, we will let $\lambda:=\sqrt{3}$. Our proof will depend on effective bounds of the distortion constant of $\mathcal{A}$. These estimates require us to find the radii of the disks $\phi_{k,n} (\D)$ for $n=1,2,3$.
\begin{prop}
\label{prop:disks}
Let $k=1,\dots,6$.
\begin{enumerate}
\item \label{eq:firstdisk} $\mbox{radius}\,\phi_{k,1}(\D)=\frac{2 \lambda-3}{3}$,
\item \label{eq:seconddisk} $\mbox{radius}\, \phi_{k,2}(\D)=\frac{14\lambda-15}{121}$,
\item \label{eq:thirddisk} $\mbox{radius} \, \phi_{k,3}(\D)=\frac{26\lambda - 21}{529}$.
\end{enumerate}
Moreover, the centers of the disks $\phi_{k,1}(\D), k=1,\dots,6,$  lie on $\partial B(0, \frac{4 \lambda-6}{3})$.
\end{prop}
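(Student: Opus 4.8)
The plan is to reduce everything to two elementary facts about the M\"obius map $f$ and then invoke the standard formula for the image of a disk under a M\"obius transformation. Since $R_{\theta_k'}$ is a Euclidean rotation about the origin, it preserves both the radius of any disk and the distance of its center from $0$; hence it suffices to analyze $f^n\big(R_{\theta_k}(f(\D))\big)$, and in particular
$$\mathrm{radius}\,\phi_{k,n}(\D)=\mathrm{radius}\,f^n\big(R_{\theta_k}(f(\D))\big).$$
First I would make $f$ explicit. A direct computation shows $f$ is parabolic with unique fixed point $z=1$, and more precisely that $\tfrac{1}{f(z)-1}=\tfrac{1}{z-1}-\tfrac1\lambda$. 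Conjugating back the translation $w\mapsto w-1/\lambda$ yields the closed form
$$f^n(z)=\frac{(\lambda-n)z+n}{-nz+(\lambda+n)},\qquad \det\begin{pmatrix}\lambda-n & n\\ -n & \lambda+n\end{pmatrix}=\lambda^2=3.$$

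Next I would locate the disk fed into $f^n$. Applying the disk-image formula (or the half-plane picture arising from the coordinate $w=1/(z-1)$) with $z_0=0$, $r=1$ shows that $f(\D)$ is the disk of radius $2\lambda-3$ centered at the real point $4-2\lambda$, internally tangent to $\partial\D$ at $z=1$. Rotating, $D:=R_{\theta_k}(f(\D))$ is the disk of the same radius $2\lambda-3$ centered at $\zeta(4-2\lambda)$, where $\zeta=e^{i\theta_k}$ and $\theta_k=\pm2\pi/3$, so $\mathrm{Re}\,\zeta=-\tfrac12$. Because $f$ has real coefficients it commutes with complex conjugation, so the choices $\zeta=e^{\pm2\pi i/3}$ give disks that are reflections of one another; this is precisely what forces the radii (and the center moduli) to be independent of $k$. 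Since the pole $z=(\lambda+n)/n>1$ of $f^n$ lies outside $\overline{\D}\supset D$, the image $f^n(D)$ is a genuine bounded disk and the formula applies.

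The core step is the radius of $f^n(D)$. Writing the denominator of the image formula as $\Delta_n:=|{-n}\cdot\zeta(4-2\lambda)+(\lambda+n)|^2-(2\lambda-3)^2n^2$ and using $\mathrm{Re}\,\zeta=-\tfrac12$ (so that $|A-B\zeta|^2=A^2+AB+B^2$ for real $A,B$) together with $\lambda^2=3$ collapses all cross terms and gives, after simplification,
$$\mathrm{radius}\,f^n(D)=\frac{3(2\lambda-3)}{\Delta_n},\qquad \Delta_n=3+6(\lambda-1)n+6(2-\lambda)n^2.$$
Substituting $n=1,2,3$ gives $\Delta_1=9$, $\Delta_2=3(13-4\lambda)$, $\Delta_3=3(31-12\lambda)$, and rationalizing by $13+4\lambda$ and $31+12\lambda$ (using $\lambda^2=3$) produces exactly $\tfrac{2\lambda-3}{3}$, $\tfrac{14\lambda-15}{121}$, and $\tfrac{26\lambda-21}{529}$. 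For the final assertion I would run the center half of the image formula at $n=1$; the analogous simplification gives the center of $f(D)$ as $(2-\lambda)\pm\tfrac{2\lambda-3}{3}i$, whose modulus, via $7-4\lambda=(2-\lambda)^2$, equals $\tfrac{4\lambda-6}{3}$, and this modulus is preserved by $R_{\theta_k'}$. The only real obstacle is bookkeeping: keeping the M\"obius image formula, the rotation, and the $\lambda=\sqrt3$ algebra organized so that the cross terms genuinely cancel. There is no conceptual difficulty once the parabolic normal form of $f$ and the identity $\mathrm{Re}\,\zeta=-\tfrac12$ are in hand.
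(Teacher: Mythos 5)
Your proposal is correct, and it takes a genuinely different route from the paper. The paper treats $n=1$ by pushing three explicit boundary points through $f\circ R_{\theta_k}\circ f$ and solving a $3\times 3$ linear system for the circle through them, and then handles $n=2,3$ by locating two image points $b_n,b_{n+1}$ on each circle, producing a third (tangency) point via Descartes' Circle Theorem, and applying the circumradius formula; only the closed form for $f^n$ (obtained there by Jordan decomposition, equivalent to your parabolic normal form $\tfrac{1}{f(z)-1}=\tfrac{1}{z-1}-\tfrac1\lambda$) is shared with your argument. You instead apply the standard M\"obius disk-image formula to $D=R_{\theta_k}(f(\D))$, which yields the single closed-form expression
$$\mathrm{radius}\,\phi_{k,n}(\D)=\frac{3(2\lambda-3)}{3+6(\lambda-1)n+6(2-\lambda)n^2}$$
valid for \emph{every} $n$; I checked the identity $|A-B\zeta|^2=A^2+AB+B^2$ computation and the values $\Delta_1=9$, $\Delta_2=3(13-4\lambda)$, $\Delta_3=3(31-12\lambda)$, which rationalize to exactly the three stated radii, and your center computation at $n=1$ reproduces the paper's center $2-\lambda\pm i\tfrac{2\lambda-3}{3}$ with modulus $\tfrac{4\lambda-6}{3}$. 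What each approach buys: the paper's argument is geometric and ties the computation to the Apollonian tangency structure (Descartes' theorem), which is in the spirit of the object being studied; your argument is more uniform and arguably more elementary, requiring no tangency bookkeeping, and as a bonus it immediately yields the monotonicity of $n\mapsto\mathrm{radius}\,\phi_{k,n}(\D)$ claimed in the paper's Remark \ref{remk:radiidec}, since both nonconstant coefficients of $\Delta_n$ are positive. The only point to be careful about, which you did address, is that the disk-image formula requires the pole $1+\lambda/n$ of $f^n$ to lie outside $\overline{D}$.
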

\begin{proof} We will first prove \eqref{eq:firstdisk}. Let $k=1,\dots,6$. The matrix representation of the map  $F_k:=f \circ R_{\theta_k} \circ f$ 
is given by
\begin{equation}\label{eq: mobdecomp}
\begin{pmatrix}
\lambda -1 & 1 \\
-1 & \lambda +1
\end{pmatrix}
\begin{pmatrix}
e^{i\theta_k} &0 \\
0 & 1 
\end{pmatrix}
\begin{pmatrix}
\lambda -1 & 1 \\
-1 & \lambda +1
\end{pmatrix} 
 =\begin{pmatrix}
e^{i\theta_k}(\lambda -1)^2-1 & e^{i\theta_k}(\lambda -1)+(\lambda +1) \\
-e^{i\theta_k}(\lambda -1)-(\lambda +1) & -e^{i\theta_k}+(\lambda +1)^2
\end{pmatrix},
\end{equation}
thus
$$
F_k(z) = \frac{(e^{i\theta_k}(\lambda -1)^2-1)z+e^{i\theta_k}(\lambda -1)+(\lambda +1)}
{(-e^{i\theta_k}(\lambda -1)-(\lambda +1))z-e^{i\theta_k}+(\lambda +1)^2}.
$$
If $k$ is even (i.e. $\theta_k=2\pi/3$,) we set $F_k:=F_e$ and if $k$ is odd (i.e. $\theta=-2\pi/3$) we set $F_k:=F_o$.

We first assume that $k$ is even. Picking 3 points on the unit circle $z_1=1$, $z_2= e^{2\pi/3}$ and $z_3= e^{-2\pi/3}$, we have 
$$
\begin{aligned}
F_e(z_1)&=\frac{(1-i) \left(-\lambda^2+\lambda\, \right)}{\lambda^2i+(1+2i) \lambda} =\frac{1}{2(2+\lambda)}+i \frac{\lambda}{2(2+\lambda)}, \\
F_e(z_2)&=2- \lambda=\frac{1}{2+\lambda}, \\
F_e(z_3)&=\frac{2 \left((-1+i)+\lambda \right)}{(1+2i)+(2+i) \lambda}=\frac{14 }{\left(2+\lambda\, \right)^2+\left(1+2 \lambda\, \right)^2}+ i \frac{2 \lambda\, }{\left(2+\lambda\, \right)^2+\left(1+2 \lambda\, \right)^2}.
\end{aligned}
$$
Thus, we need to compute the center and radius of the circle passing through  
$$
\left(\frac{1}{2(2+\lambda)},\frac{\lambda}{2(2+\lambda)}\right),\quad \left(\frac{1}{2+\lambda},0\right),\quad \left(\frac{14 }{\left(2+\lambda\, \right)^2+\left(1+2 \lambda\, \right)^2},\frac{2\lambda  }{\left(2+\lambda\, \right)^2+\left(1+2 \lambda\, \right)^2}\right).
$$
This is equivalent to solving the $3\times 3$ linear system $Ax=b$ where
\[
A =  \begin{pmatrix}
\frac{1}{2+\lambda} & \frac{\lambda}{2+\lambda} & 1 \\
\frac{2}{2+\lambda} & 0 &  1\\
\frac{28}{\left(2+\lambda \right)^2+\left(1+2 \lambda \right)^2} & \frac{4\lambda}{\left(2+\lambda \right)^2+\left(1+2 \lambda\, \right)^2} &  1\\
\end{pmatrix} 
\quad\mbox{ and
}\quad
b =  \begin{pmatrix}
\frac{2-\lambda}{2+\lambda} \\
\frac{1}{(2+\lambda)^2}\\
\frac{5-2\lambda }{5+2\lambda }
\end{pmatrix} =
\begin{pmatrix}
(2-\lambda)^2 \\
(2-\lambda)^2\\
\frac{(5-2\lambda)^2 }{13 }
\end{pmatrix}.
\]
Inverting $A$ we obtain
$$
A^{-1}=\begin{pmatrix}
\frac{-3-2\lambda}{6} & \frac{-1-\lambda}{2} & \frac{6+5\lambda}{6} \\
\frac{4+3\lambda}{6} & \frac{-9-5\lambda}{6} &  \frac{5+2\lambda}{6}\\
\frac{\lambda}{3} & \lambda  &  \frac{3-4\lambda}{3}\\
\end{pmatrix} = 
\frac{1}{6}\begin{pmatrix}
-3-2\lambda & -3-3\lambda & 6+5\lambda \\
4+3\lambda & -9-5\lambda &  5+2\lambda\\
2\lambda & 6\lambda  &  6-8\lambda
\end{pmatrix}
$$
and consequently:
$$
\begin{pmatrix}
x_1\\ x_2\\x_3
\end{pmatrix}=
A^{-1}b= \frac{1}{6}\begin{pmatrix}
-3-2\lambda & -3-3\lambda & 6+5\lambda \\
4+3\lambda & -9-5\lambda &  5+2\lambda\\
2\lambda & 6\lambda  &  6-8\lambda
\end{pmatrix}
\begin{pmatrix}
(2-\lambda)^2 \\
(2-\lambda)^2\\
\frac{(5-2\lambda)^2 }{13 }
\end{pmatrix}=
\begin{pmatrix}
2-\lambda \\
\frac{2\lambda-3}{3}\\
4\lambda-7
\end{pmatrix}.
$$
Thus,
$$F_e (\D)=B\left(2-\lambda+i\frac{2\lambda-3}{3},\frac{2\lambda-3}{3}\right).$$
Since $F_o(z)=\overline{F_e(\bar{z})}$, we deduce that 
$$F_o(\D)=\overline{F_e(\D)}=B\left(2-\lambda-i\frac{2\lambda-3}{3},\frac{2\lambda-3}{3}\right).$$
Recalling that  $\phi_{k,1}=R_{\theta_k'} \circ f \circ R_{\theta_k} \circ f$, we deduce \eqref{eq:firstdisk}. Moreover, we see that the centers of the circles $\phi_{k,1}(\D),k=1,\dots,6,$ are the points
$$i \frac{4\lambda-6}{3}, -i \frac{4\lambda-6}{3}, 2-\lambda+i\frac{2\lambda-3}{3},2-\lambda-i\frac{2\lambda-3}{3},2-\lambda-i\frac{2\lambda-3}{3}, -2+\lambda-i\frac{2\lambda-3}{3}.$$
These are the six equidistant points on the circle $\partial B\left(0,\frac{4\lambda-6}{3} \right)$. 

It remains to prove \eqref{eq:seconddisk} and \eqref{eq:thirddisk}. Let  
\begin{equation}\label{eq: matrix F}
F = \begin{pmatrix}
\lambda -1 & 1 \\ -1 & \lambda+1
\end{pmatrix},
\end{equation}
be the  matrix representation of the M\"obius map $f(z)=\frac{(\lambda-1)z+1}{-z+(\lambda+1)}$.
The Jordan decomposition of $F$ is given by
\[F=VJV^{-1} = \begin{pmatrix}
-1 & 1 \\
-1 & 0
\end{pmatrix}
\begin{pmatrix}
\lambda & 1 \\ 
0 & \lambda
\end{pmatrix}
\begin{pmatrix}
0 & -1 \\
1 & -1
\end{pmatrix}.\]
Moreover,
\[J^n = \left[ \lambda I + \begin{pmatrix}
0 & 1 \\
0 & 0
\end{pmatrix} \right]^n= \lambda^n I + n \lambda^{n-1}\begin{pmatrix}
0 & 1 \\
0 & 0
\end{pmatrix} = \begin{pmatrix}
\lambda^n & n \lambda^{n-1} \\
0 & \lambda^n
\end{pmatrix}\]
Thus the matrix representation of $f^n$ is
\[F^n = \lambda^n\begin{pmatrix}
-1 & 1 \\
-1 & 0
\end{pmatrix} \begin{pmatrix}
1 & n/ \lambda \\
0 & 1
\end{pmatrix}\begin{pmatrix}
0 & -1 \\
1 & -1
\end{pmatrix} \]
Hence,
$$f^n(z)=\frac{(-n+\lambda)z+n}{-nz+n+\lambda }.$$
We also let 
$$F_{k,n}=f^n \circ R_{\theta_k} \circ f.$$
Thus, we can write:
\begin{equation}
\label{eq:rotationfkn}\phi_{k,n} =R_{\theta_k'} \circ F_{k,n}.
\end{equation}
We will first find $f(\D)$. We note that
$$
f(1)=1 \mbox{ and }
f(-1)= \frac{-(\lambda - 1) +1}{1+\lambda+1} = \frac{2-\lambda}{2+\lambda}=(2-\lambda)^2.
$$
Since $f(z)=-(\lambda -1)-\frac{3}{z-(\lambda+1)}$, (in particular the coefficients of the matrix $F$ are all real numbers) we conclude that the center of $f(\D)$ has to lie in the real axis. Moreover, the center of $f(\D) \notin \{f(1), f(-1)\}$ because otherwise $0 \in f(\D)$ and this is impossible because $f$ only vanishes at $(\lambda-1)^{-1}>1$. Thus, the points $f(1)$ and $f(-1)$ are antipodal in $f(\D)$ and we can conclude that
\begin{equation}
\label{eq:f(D)}
f(\D)=B(2(2-\lambda),\lambda(2-\lambda)).
\end{equation}

Assume that $k$ is even. Note that because $f(1)=1$ we have that $$R_{\theta_k} \circ f (1)=-\frac{1}{2}+i \frac{\sqrt{3}}{2}.$$
Moreover,
$$F_{k,n}(1)=f^n \circ R_{\theta_k} \circ f (1) =\frac{2 n^2-1}{2 n \left(n+\lambda \right)+2}+i\frac{\lambda}{2 n \left(n+\lambda \right)+2}:=b_n.$$
Thus,
\begin{equation}
\label{eq:first blue point}
b_n \in F_{k, n} (\partial \D) \quad \forall n \in \N. 
\end{equation}
However, more is true:
\begin{equation}
\label{eq:sec blue point}
b_{n+1} \in F_{k, n} (\partial \D), \quad \forall n \in \N. 
\end{equation}
One can verify \eqref{eq:sec blue point} by showing that 
$$(f^n)^{-1}(b_{n+1}) \in f(\partial \D).$$
For example,
$$
f^{-1}(b_2)=f^{-1}\left(\frac{7}{10+4\lambda } +i\frac{\lambda}{10+4\lambda }\right)=\frac{2-\lambda}{{2}}+i \frac{2\lambda-3}{2} \in f(\partial \D).
$$
We have thus identified two points on each circle $F_{k,n}(\partial \D)$, see the blue points in Figure \ref{fig: 3 points on circle}.
\begin{figure}[h]
  \hfill
    \begin{center}
 \includegraphics[scale=.25]{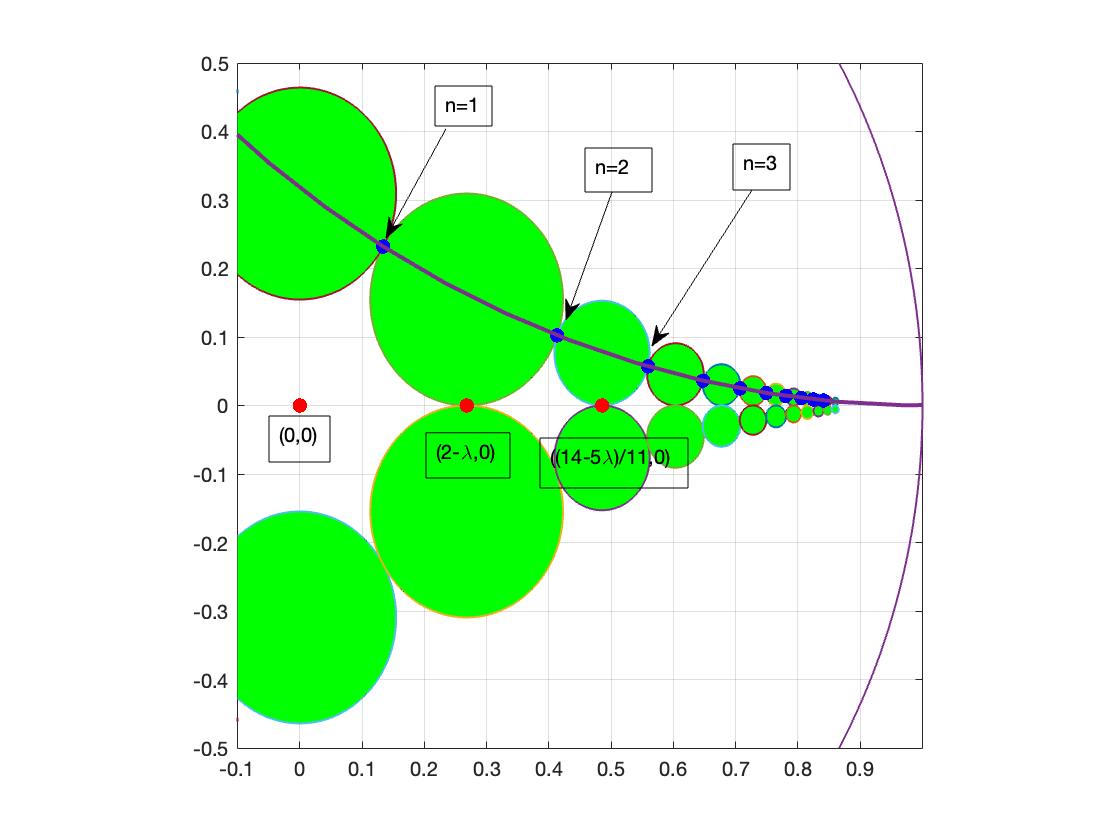}\\
  \caption{Points on the circles}
        \label{fig: 3 points on circle}
    \end{center}
  \hfill
\end{figure}
One can also check  that for all $n=1,2,\dots,$ the points 
$$
\frac{2 n^2-1}{2 n \left(n+\lambda \right)+2}+i \frac{\lambda}{2 n \left(n+\lambda \right)+2}
$$
lie on the circle of radius $R_0=\lambda$ centered at $1+i\lambda$ since
\[\left(\frac{2n^2-1}{2n \left(n+\lambda\, \right)+2}-1\right)^2+\left(\frac{\lambda}{2 n \left(n+\lambda \right)+2}-\lambda \right)^2=3.
\]

We will now describe how to identify a third point in each circle $F_{k,n} (\partial \D)$. This will allow us to prove \eqref{eq:seconddisk} and \eqref{eq:thirddisk}. These new points correspond to the red points in Figure \ref{fig: 3 points on circle}. It follows by the arguments in the proof of \eqref{eq:firstdisk} that $t_1:=2-\lambda \in \partial F_{k,1} (\partial \D)$. Moreover, the circle $A_1:=\partial B(0, 2-\lambda)$ is tangent to the "exterior" tangent circles $\partial B(1+\lambda i, \lambda)$ and $\partial B(1-\lambda i, \lambda)$ (who are also tangent to each other). Since, there is no triple tangency, we can apply Descartes' Theorem in order to find the curvature of the corresponding Apollonian circle  (the second largest red circle inside $\D$ in Figure \ref{fig: Descartes' circles}). Recall that Descartes' Theorem states that the curvatures $k_1,k_2,k_3,k_4$ of  four circles with $6$ double tangencies and no triple tangency satisfy the equation 
\begin{equation}\label{eq: Decartes}
   ( k_1+k_2+k_3+k_4)^2 = 2 ( k^2_1+k^2_2+k^2_3+k^2_4).
\end{equation}
\begin{figure}[h]
  \hfill
    \begin{center}
\includegraphics[scale=.3]{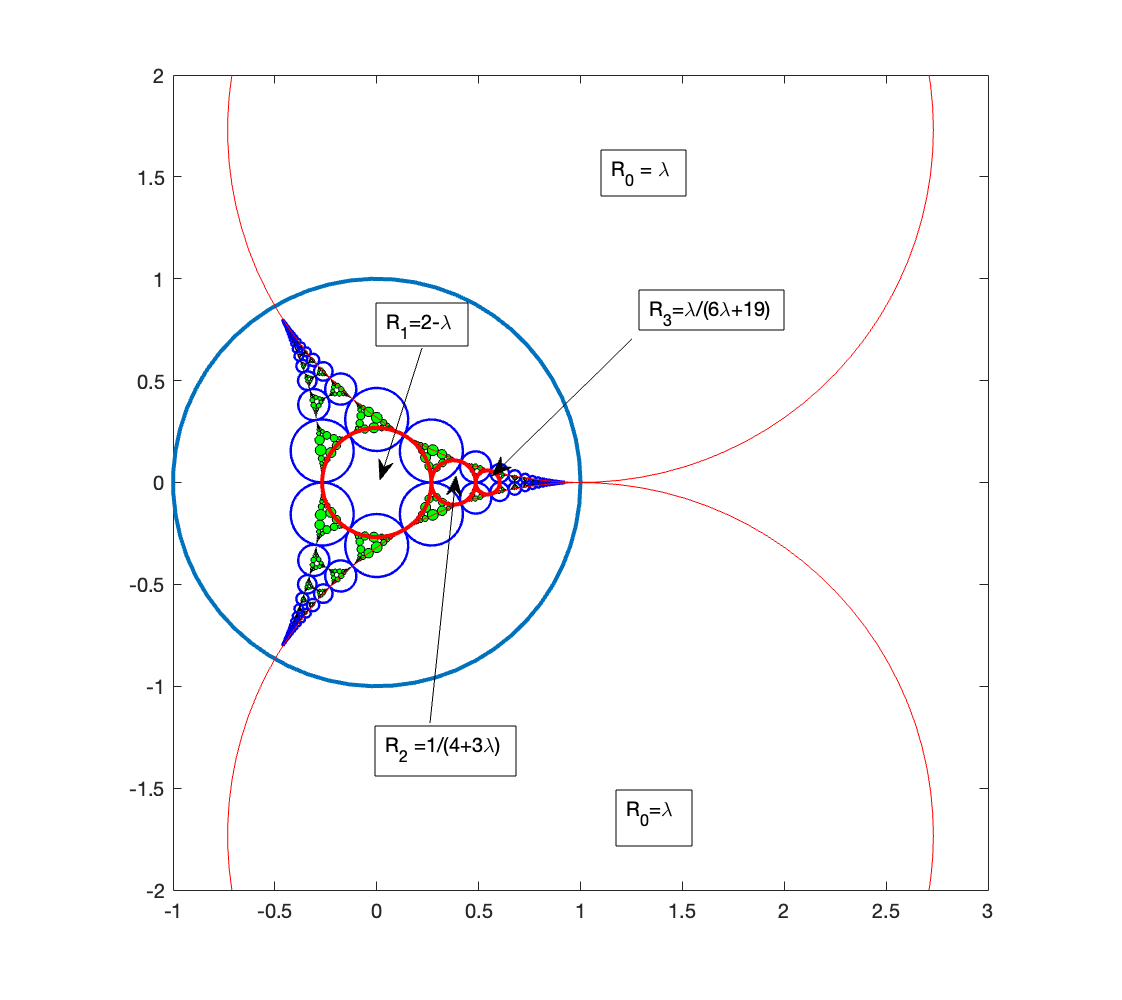}\\
   \caption{A sequence of Apollonian circles}
        \label{fig: Descartes' circles}
          \end{center}
  \hfill
\end{figure}
In our case
$$k_1=k_2=\frac{1}{\lambda} \mbox{ and }k_3=\frac{1}{2-\lambda}={2+\lambda}.$$
Thus the curvature and radius of the next Apollonian circle $A_2$ is
$$
k_4=4+3\lambda\quad \Rightarrow \quad R_2=\frac{1}{4+3\lambda}.
$$
Using this information we find that
$$t_2:=2+\lambda+ \frac{2}{4+3\lambda}=\frac{14-5\lambda}{11} \in F_{k,2}(\partial \D).$$

Thus, the following three distinct points
\begin{align*}
    t_2&= \frac{14-5\lambda}{11}\\
    b_1 &= \frac{2*2^2-1}{2*2 \left(2+\lambda \right)+2}+i\frac{\lambda}{2*2 \left(2+\lambda \right)+2}=\frac{7}{10+4\lambda }+i\frac{\lambda}{10+4\lambda }\\
     b_2 &= \frac{2*3^2-1}{2*3 \left(3+\lambda \right)+2}+i\frac{\lambda}{2*3 \left(3+\lambda \right)+2}=\frac{17}{20+6\lambda }+i\frac{\lambda}{20+6\lambda }.
\end{align*}
lie $F_{k,2}(\partial \D)$. Thus,
\begin{equation*}
\mbox{radius} F_{k,2}(\partial \D)= \frac{|t_2-b_1|\cdot|t_2-b_2|\cdot|b_1-b_2|}{4|\triangle (t_2,b_1,b_2)|}=\frac{14\lambda-15}{121}.
\end{equation*}
As in the proof of \eqref{eq:firstdisk} we see that that if $k$ is odd then $F_{k,2} (\D)=\overline F_{k+1, 2(\D)}$. Thus, \eqref{eq:seconddisk} follows because by \eqref{eq:rotationfkn}, the disks $\phi_{k,2} (\D)$ are obtained by rotations of $F_{1,2} (\D)$ and $F_{2,2} (\D)$.

We will use Descartes' formula \eqref{eq: Decartes} again to find the next Apollonian circle $A_3$ (the third largest red circle inside $\D$ in Figure \ref{fig: Descartes' circles}). In this case, we have $k_1=k_2=1/\lambda$ and $k_3=4+3\lambda$. Thus the curvature and radius of $A_3$ are 
$$
k_4=\frac{6\lambda+19}{\lambda}\quad  \mbox{ and }  R_3=\frac{\lambda}{6\lambda+19}.
$$
Using this information we find that
$$
t_3:=\frac{14-5\lambda}{11}+\frac{2\lambda}{6\lambda+19}=\frac{26-7\lambda}{23} \in F_{k,3} (\D).
$$
Thus, the following distinct points
\begin{align*}
    t_3&= \frac{26-7\lambda}{23}\\
    b_2 &= \frac{2*3^2-1}{2*3 \left(3+\lambda \right)+2}+i\frac{\lambda}{2*3 \left(3+\lambda \right)+2}=\frac{17}{20+6\lambda }+i\frac{\lambda}{20+6\lambda }\\
     b_3 &= \frac{2*4^2-1}{2*4 \left(4+\lambda \right)+2}+i\frac{\lambda}{2*4 \left(4+\lambda \right)+2}=\frac{31}{34+8\lambda }+i\frac{\lambda}{34+8\lambda }
\end{align*}
lie in $F_{k,3} (\D)$. Thus,
\begin{equation*}
\mbox{radius} F_{k,3}(\partial \D)= \frac{|t_3-b_2|\cdot|t_3-b_3|\cdot|b_2-b_3|}{4|\triangle (t_3,b_3,b_2)|}=\frac{26\lambda - 21}{529}.
\end{equation*}
Therefore, \eqref{eq:thirddisk} follows arguing exactly as (in the end of the) proof of \eqref{eq:seconddisk}. 
\end{proof}
\begin{remark}
\label{remk:radiidec}
If $k_m$ is the curvature of the Apollonian disk $A_m$ (the $m$ largest red disk inside $\D$ in Figure \ref{fig: Descartes' circles}) the curvature $k_{m+1}$ of the next Apollonian circle is given by
$$
k_{m+1}=k_m+\frac{2}{\lambda}+ 2\sqrt{\frac{2k_m}{\lambda}+\frac{1}{3}}.
$$
Thus, the radii of the Apollonian circles $A_m$ form a decreasing sequence $(R_n)_{n \in \N}$. One can then show that the sequence $\{\mbox{radius} \, \phi_{k,n}(\D)\}_{n \in \N}$ is also decreasing.
\end{remark}
The first step in bounding the distortion constant of $\mathcal{A}$ is obtaining an upper bound for the distortion of the maps $\f_{k,n}, (k,n) \in I$. 
\begin{prop}
\label{dist-dima}
For every $(k,n) \in I$:
$$
\frac{\max_{z \in \D} | \phi'_{k,n}(z)|}{\min_{z \in \D} | \phi'_{k,n}(z)|} \leq K_1,$$
where
$$K_1=\frac{\sqrt{\left(\frac{1}{2}+(1+\lambda)(\lambda+1)\right)^2+\frac{3}{4}}
+\sqrt{\left(\frac{3-\lambda}{2}+\lambda\right)^2+\frac{3}{4}\left(\lambda-1\right)^2}
}
{\sqrt{\left(\frac{1}{2}+(1+\lambda)(\lambda+1)\right)^2+\frac{3}{4}}
-\sqrt{\left(\frac{3-\lambda}{2}+\lambda\right)^2+\frac{3}{4}\left(\lambda-1\right)^2}}\approx 3.53765052763825.$$
\end{prop}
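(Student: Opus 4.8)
The plan is to exploit the fact that each $\phi_{k,n}$ is a Möbius transformation, so that the modulus of its derivative is completely controlled by the bottom row of its matrix. Writing a Möbius map as $g(z)=\frac{az+b}{cz+d}$ one has $|g'(z)|=|ad-bc|\,|cz+d|^{-2}$, and since $z\mapsto cz+d$ carries $\overline{\D}$ onto the closed disk $\overline{B(d,|c|)}$, the quantity $|cz+d|$ ranges exactly over the interval $[\,|d|-|c|,\,|d|+|c|\,]$ provided the pole stays off $\overline{\D}$, i.e. $|d|>|c|$. Hence
\[
\frac{\max_{z\in\D}|g'(z)|}{\min_{z\in\D}|g'(z)|}=\left(\frac{\max_{z\in\D}|cz+d|}{\min_{z\in\D}|cz+d|}\right)^{2}=\left(\frac{|d|+|c|}{|d|-|c|}\right)^{2},
\]
so the entire problem reduces to computing $|c|,|d|$ for $\phi_{k,n}$ and maximizing the right-hand side over $(k,n)\in I$.

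First I would discard the data that does not affect $|\phi'_{k,n}|$. By \eqref{eq:rotationfkn}, $\phi_{k,n}=R_{\theta_k'}\circ F_{k,n}$, and since the outer rotation is a Euclidean isometry with $|R_{\theta_k'}'|\equiv 1$, we have $|\phi'_{k,n}(z)|=|F'_{k,n}(z)|$ for every $z$. Thus it suffices to treat $F_{k,n}=f^n\circ R_{\theta_k}\circ f$, whose matrix is the product $M_{k,n}=F^{n}\,D_k\,F$ with $F$ as in \eqref{eq: matrix F} and $D_k=\mathrm{diag}(e^{i\theta_k},1)$. Replacing $\theta_k=-2\pi/3$ by $\theta_k=+2\pi/3$ merely conjugates every entry of $M_{k,n}$, hence leaves $|c|,|d|$ unchanged, so the distortion is identical for all six values of $k$ and a single representative suffices. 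Using the closed form of $F^{n}$ from the proof of Proposition~\ref{prop:disks} I would multiply out $M_{k,n}=F^{n}D_kF$ and read off its bottom row $(c_n,d_n)$; taking $e^{i\theta_k}=-\tfrac12+i\tfrac{\lambda}{2}$ makes each of $\mathrm{Re}\,c_n,\ \mathrm{Im}\,c_n,\ \mathrm{Re}\,d_n,\ \mathrm{Im}\,d_n$ linear in $n$, so that $|c_n|^2$ and $|d_n|^2$ are explicit quadratics in $n$.

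Finally I would show the distortion is maximized at $n=1$. Since $\big(\tfrac{|d_n|+|c_n|}{|d_n|-|c_n|}\big)^2$ is increasing in the single quantity $|c_n|/|d_n|$, it is enough to prove that $r_n:=|c_n|^2/|d_n|^2$ is decreasing for $n\ge 1$. As numerator and denominator are quadratics with positive leading coefficients, the $n^3$ term of $\tfrac{d}{dn}\!\big(|c_n|^2/|d_n|^2\big)$ cancels, leaving a polynomial of degree at most two whose sign on $[1,\infty)$ can be settled directly; in the same step one checks $|d_n|>|c_n|$ for all $n$ (equivalently, the pole of $F_{k,n}$ stays outside $\overline{\D}$, consistent with \eqref{confext}), which is needed for the bound to be finite. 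Evaluating at $n=1$ gives $|d_1|=\sqrt{(\tfrac12+(1+\lambda)(\lambda+1))^2+\tfrac34}$ and $|c_1|=\sqrt{(\tfrac{3-\lambda}{2}+\lambda)^2+\tfrac34(\lambda-1)^2}$, and substituting into $\big(\tfrac{|d_1|+|c_1|}{|d_1|-|c_1|}\big)^2$ reproduces the stated value of $K_1$. The only genuinely delicate point is the monotonicity in $n$; everything else is bookkeeping with $2\times2$ matrices, and I expect the sign analysis of that reduced (degree $\le 2$) polynomial to be the crux of the argument.
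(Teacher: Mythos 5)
Your proposal is correct, and up to the last step it follows the same route as the paper: both arguments discard the outer rotation $R_{\theta_k'}$, use $|g'(z)|=|\det M_g|\,|cz+d|^{-2}$, observe that $|cz+d|$ ranges over $[\,|d|-|c|,\,|d|+|c|\,]$ on $\overline{\D}$, and thereby reduce the distortion of a single generator to $\bigl(\frac{|d|+|c|}{|d|-|c|}\bigr)^2$ read off from the bottom row of the matrix of $f^n\circ R_{\theta_k}\circ f$, finally evaluated at $n=1$. Where you genuinely diverge is on the crux you yourself flag: showing that $n=1$ maximizes the distortion. The paper does \emph{not} prove monotonicity; it isolates the $\lambda/n$-dependence in a function $h(n)$, proves an analytic tail bound $h(n)\le 1.877<h(1)\approx 1.8809$ valid for $n\ge 23$, and checks $h(2),\dots,h(22)\le h(1)$ numerically. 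Your plan replaces this by a purely analytic monotonicity argument, and it does work — in fact more cleanly than you anticipate. Up to the common factor $\lambda^{n-1}$ (irrelevant for the ratio), one computes $|c_n|^2=(6-3\lambda)n^2+(3\lambda-3)n+3$ and $|d_n|^2=(6+3\lambda)n^2+(9\lambda+15)n+(12+6\lambda)$, so the numerator of $\frac{d}{dn}\bigl(|c_n|^2/|d_n|^2\bigr)$, namely $(a_2b_1-a_1b_2)n^2+2(a_2b_0-a_0b_2)n+(a_1b_0-a_0b_1)$, loses not only its cubic term (as you predicted) but its quadratic term as well, since $a_2b_1-a_1b_2=0$; it equals $-18\lambda n-(9\lambda+27)<0$. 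Moreover $|d_n|^2-|c_n|^2=6\lambda n^2+(6\lambda+18)n+(6\lambda+9)>0$, which settles the pole condition you need. So your ``delicate'' sign analysis is immediate, and your route buys a fully analytic, numerics-free proof plus the stronger conclusion that $K_{k,n}$ is strictly decreasing in $n$ (the paper only records $K_{k,n}\to 3$ as a remark); the paper's tail-bound-plus-finite-check template buys freedom from polynomial bookkeeping and adapts to situations where exact monotonicity is unavailable. One caution when you match constants: the formula displayed for $K_1$ in the statement is the \emph{unsquared} ratio ($\approx 1.8809$), whereas the quoted numerical value $3.53765\dots$ — and the constant actually fed into Proposition \ref{distprop} to produce $K_{\mathcal{A}}=5.900319$ — is its square; your $\bigl(\frac{|d_1|+|c_1|}{|d_1|-|c_1|}\bigr)^2$ reproduces the intended (squared) constant, so the discrepancy is a typo in the statement, not a flaw in your argument.
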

\begin{proof} 
We saw in the proof of Proposition \ref{prop:disks} that 
the map
\[\phi_{k,n}(z) = R_{\theta_k'} \circ f^n \circ R_{\theta_k} \circ f(z)\]
has matrix representation 
\begin{equation*}
\begin{split}
\Phi_{k,n} &:= n\lambda^{n-1} \begin{pmatrix}
e^{i \theta_k'} & 0 \\
0 & 1
\end{pmatrix} 
\begin{pmatrix}
-1 & 1 \\
-1 & 0
\end{pmatrix} 
\begin{pmatrix}
\lambda/n & 1 \\
0 & \lambda/n
\end{pmatrix}
\begin{pmatrix}
0 & -1 \\
1 & -1
\end{pmatrix} 
\begin{pmatrix}
e^{i\theta_k} &0 \\
0 & 1 
\end{pmatrix}
\begin{pmatrix}
\lambda -1 & 1 \\
-1 & \lambda +1
\end{pmatrix}  \\
&= n\lambda^{n-1} \begin{pmatrix}
e^{i \theta_k'} & 0 \\
0 & 1
\end{pmatrix} 
\begin{pmatrix}
-1+e^{i \theta_k}(\lambda-1)(-1+\frac{\lambda}{n}) & e^{i \theta_k}(-1+\frac{\lambda}{n})+(\lambda+1) \\
-e^{i \theta_k}(\lambda-1)-(1+\frac{\lambda}{n}) & -e^{i \theta_k}+(1+\frac{\lambda}{n})(\lambda+1)
\end{pmatrix}.
\end{split}
\end{equation*}
Recall that any M\"obius transformation  
\[g(z) = \frac{az+b}{cz+d}, \quad c \neq 0,\]
has a matrix representation
\[M_g = \begin{pmatrix}
a & b \\ c & d
\end{pmatrix},
\]
and  
\begin{equation}\label{eq:mobius-deriv}
  |g'(z)| = \frac{|\det(M_g)|}{|cz+d|^2}, \quad z\neq -d/c .  
\end{equation}
Therefore,
$$
K_{k,n}= \frac{\max_{z\in \D}|D \phi_{k,n}(z)|}{\min_{z\in \D} |D\phi_{k,n}(z)|}=
\frac{\max_{z\in \D}\frac{1}{|cz+d|^2}} {\min_{z\in \D}\frac{1}{|cz+d|^2}}=
\frac{\max_{z\in \D}\frac{1}{|z+d/c|^2}} {\min_{z\in \D}\frac{1}{|z+d/c|^2}},
$$
where 
$$
c = -e^{i \theta_k}(\lambda-1)-\left(1+\frac{\lambda}{n}\right)
\mbox { and }
d= -e^{i \theta_k}+\left(1+\frac{\lambda}{n}\right)(\lambda+1).
$$

From basic complex analysis we know that the minimum and maximum of $|z-A|$ on the circle $|z-z_0|=r$ are attained at 
$$
z=A+\left(1-\frac{r}{|z_0-A|}\right)(z_0-A),
$$
and 
$$
z=A+\left(1+\frac{r}{|z_0-A|}\right)(z_0-A),
$$
respectively, and the  minimum and maximum  are
$$
|z-A|=||z_0-A|-r|,
$$
and 
$$
|z-A|=|z_0-A|+r.
$$
In our situation we have   $A=-d/c$, $r=1$, and $z_0=0$, and as a result 
$$
K_{k,n}= \left(\frac{|d/c|+1^2}{|d/c|-1}\right)^2=\left(\frac{|d|+|c|}{|d|-|c|}\right)^2=\left(\frac{|-e^{i \theta_k}+(1+\frac{\lambda}{n})(\lambda+1)|+|-e^{i \theta_k}(\lambda-1)-(1+\frac{\lambda}{n})|}{|-e^{i \theta_k}+(1+\frac{\lambda}{n})(\lambda+1)|-|-e^{i \theta_k}(\lambda-1)-(1+\frac{\lambda}{n})|}\right)^2.
$$
Using that $e^{i \theta_k}=-\frac{1}{2}\pm\frac{\lambda}{2}i$ for $\theta_k=\pm \frac{2\pi}{3}$, we have:
\begin{equation*}
\begin{split}
&\frac{|-e^{i \theta_k}+(1+\frac{\lambda}{n})(\lambda+1)|+|-e^{i \theta_k}(\lambda-1)-(1+\frac{\lambda}{n})|}{|-e^{i \theta_k}+(1+\frac{\lambda}{n})(\lambda+1)|-|-e^{i \theta_k}(\lambda-1)-(1+\frac{\lambda}{n})|}\\
&\quad\quad\quad= \frac{\sqrt{\left(\frac{1}{2}+(1+\frac{\lambda}{n})(\lambda+1)\right)^2+\frac{3}{4}}
+\sqrt{\left(\frac{3-\lambda}{2}+\frac{\lambda}{n}\right)^2+\frac{3}{4}\left(\lambda-1\right)^2}
}
{\sqrt{\left(\frac{1}{2}+(1+\frac{\lambda}{n})(\lambda+1)\right)^2+\frac{3}{4}}
-\sqrt{\left(\frac{3-\lambda}{2}+\frac{\lambda}{n}\right)^2+\frac{3}{4}\left(\lambda-1\right)^2}}:=h(n).\end{split}
\end{equation*}
Thus,
$$
K_1=\sup_{(k,n) \in I} K_{k,n}=\sup_{n\in \N} (h(n))^2.
$$
It remains to show that 
\begin{equation}
\label{eq:bdbyh(1)}
h(n) \leq h(1), \quad \forall n \in \N.
\end{equation}
For $n\in [t_{min},\infty)$ with $t_{min}>0$, we have 
$$
0\le \frac{\lambda}{n}\le \frac{\lambda}{t_{min}}\quad\text{and}\quad
\left(\frac{3-\lambda}{2}\right)^2\le \left(\frac{3-\lambda}{2}+\frac{\lambda}{n}\right)^2\le \left(\frac{3-\lambda}{2}+\frac{\lambda}{t_{min}}\right)^2,
$$
and as a result for $n\in [t_{min},\infty)$
$$
h(n) \le  \frac{\sqrt{\left(\frac{1}{2}+(1+\frac{\lambda}{t_{min}})(\lambda+1)\right)^2+\frac{3}{4}}
+\sqrt{\left(\frac{3-\lambda}{2}+\frac{\lambda}{t_{min}}\right)^2+\frac{3}{4}\left(\lambda-1\right)^2}
}
{\sqrt{\left(\frac{1}{2}+(\lambda+1)\right)^2+\frac{3}{4}}
-\sqrt{\left(\frac{3-\lambda}{2}+\frac{\lambda}{t_{min}}\right)^2+\frac{3}{4}\left(\lambda-1\right)^2}}.
$$
Thus, for example for $t_{min}=23$, i.e. for $n\in[23,\infty]$, we have 
$$
h(n) \le  \frac{\sqrt{\left(\frac{1}{2}+(1+\frac{\lambda}{23})(\lambda+1)\right)^2+\frac{3}{4}}
+\sqrt{\left(\frac{3-\lambda}{2}+\frac{\lambda}{23}\right)^2+\frac{3}{4}\left(\lambda-1\right)^2}
}
{\sqrt{\left(\frac{1}{2}+(\lambda+1)\right)^2+\frac{3}{4}}
-\sqrt{\left(\frac{3-\lambda}{2}+\frac{\lambda}{23}\right)^2+\frac{3}{4}\left(\lambda-1\right)^2}}\approx 1.877,
$$
and checking directly for $n=1,2,\dots,22$, we have 

\begin{figure}[h]
  \hfill
   \begin{minipage}[t]{.4\textwidth}
    \begin{center}
\begin{verbatim}
h(1)=  1.880864303355842
h(2)=  1.806934748354880
h(3)=  1.776916017222780
h(4)=  1.761874740622607
h(5)=  1.753291620435355
h(6)=  1.747939742649673
h(7)=  1.744380723178319
h(8)=  1.741895325796543
h(9)=  1.740091692069607
h(10)= 1.738741615928681
h(11)= 1.737704919314939
h(12)= 1.736891653749831
h(13)= 1.736241960528771
h(14)= 1.735714756401264
h(15)= 1.735281087441614
h(16)= 1.734920073430289
h(17)= 1.734616350122985
h(18)= 1.734358409077515
h(19)= 1.734137492380911
h(20)= 1.733946840037007
h(21)= 1.733781167059785
h(22)= 1.733636293517403
\end{verbatim}
    \end{center}
  \end{minipage}
  \hfill
  \begin{minipage}[t]{.5\textwidth}
    \begin{center}
\includegraphics[scale=.2]{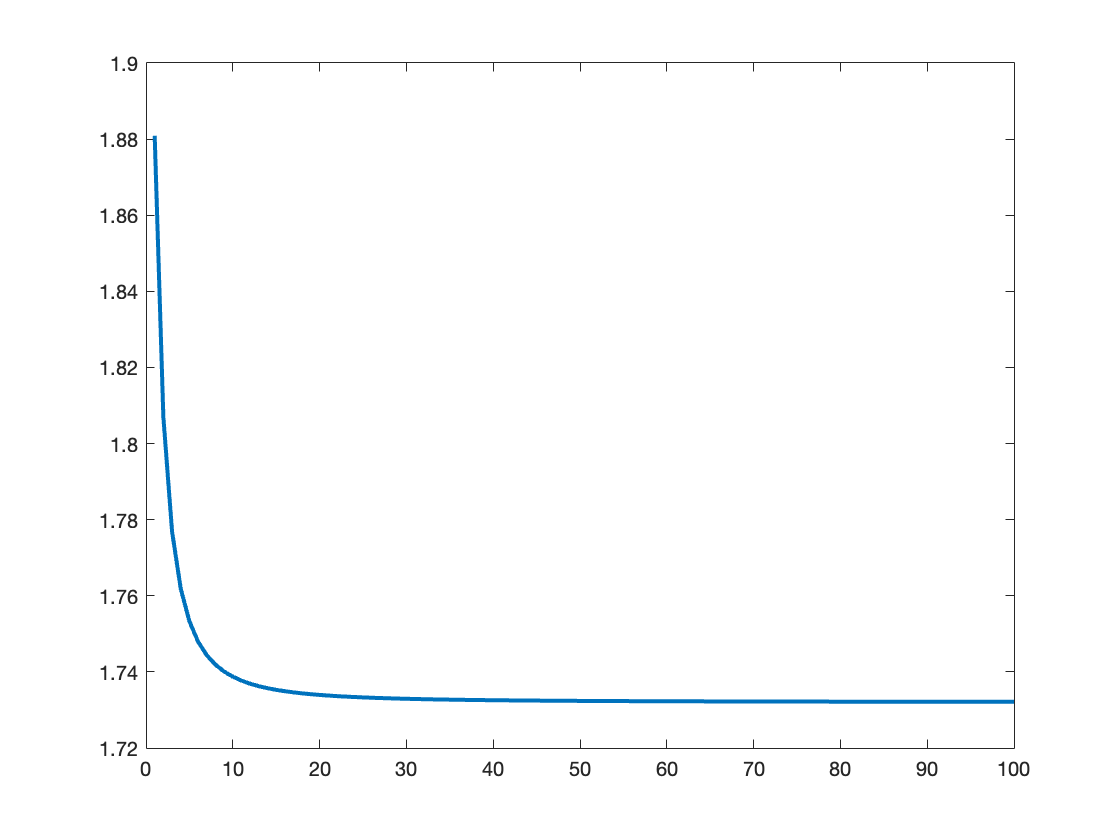}\\
  \caption{The graph of $h: \R^+ \to \R^+$}
        \label{fig: h(t)}
          \end{center}
  \end{minipage}
  \hfill
\end{figure}

Thus, we have established \eqref{eq:bdbyh(1)} and the proof is complete.
\end{proof}

\begin{remark} Interestingly, 
\begin{equation*}
\begin{split}
\lim_{n \to \infty} K_{k,n}&=\lim_{n \to \infty} h(n)^2 \\
&=\lim_{n\to\infty}\frac{\left(|-e^{i \theta_k}+(1+\frac{\lambda}{n})(\lambda+1)|+|-e^{i \theta_k}(\lambda-1)-(1+\frac{\lambda}{n})|\right)^2}{\left(|-e^{i \theta_k}+(1+\frac{\lambda}{n})(\lambda+1)|-|-e^{i \theta_k}(\lambda-1)-(1+\frac{\lambda}{n})|\right)^2}\\
&=
\left(\frac{|-e^{i \theta_k}+\lambda+1|+|-e^{i \theta_k}(\lambda-1)-1|}{|-e^{i \theta_k}+\lambda+1|-|-e^{i \theta_k}(\lambda-1)-1|}\right)^2=3,
\end{split}
\end{equation*}
where we used that $\theta_k=\pm \frac{2\pi}{3}$ and  $\lambda =\sqrt{3}$.
\end{remark}
We can now use Propositions \ref{prop:disks} and \ref{dist-dima} to obtain the desired distortion bounds.
\begin{prop}
\label{distprop}If $\om \in I^\ast$ then
\begin{equation}
\label{bdk}\frac{\max_{z \in \D} | \phi'_\om(z)|}{\min_{w \in \D} | \phi'_\om (w)|} \leq 5.900319:=K_{\mathcal{A}}.
\end{equation}
If $\om \in I_{n>1}^\ast$ then
\begin{equation}
\label{bdk>1}\frac{\max_{z \in \D} | \phi'_\om(z)|}{\min_{w \in \D} | \phi'_\om (w)|} \leq 5.03661:=K_{\mathcal{A}_{n>1}}.
\end{equation}
If $\om \in I_{n>2}^\ast$ then
\begin{equation}
\label{bdk>2}\frac{\max_{z \in \D} | \phi'_\om(z)|}{\min_{w \in \D} | \phi'_\om (w)|} \leq 4.3655:=K_{\mathcal{A}_{n>2}}.
\end{equation}
\end{prop}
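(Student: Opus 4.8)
The plan is to exploit that $\phi_\omega$ is again a M\"obius transformation (a composition of the M\"obius maps $f$, $f^n$ and the rotations), so that $\phi_\omega(\D)$ is a genuine Euclidean disk and $|\phi_\omega'|$ is given by \eqref{eq:mobius-deriv}. Write $\omega=\omega_1\cdots\omega_m$ and let $\psi_j:=\phi_{\omega_{j+1}}\circ\cdots\circ\phi_{\omega_m}$ denote the tail maps, with $\psi_m:=\mathrm{id}$, so that $B_j:=\psi_j(\D)$ is a disk of radius $R^{(j)}:=\mbox{radius}\,\psi_j(\D)$ contained in $\D$. By the chain rule, evaluating at a maximizer and a minimizer of $|\phi_\omega'|$ on $\D$ and using $\psi_j(z),\psi_j(w)\in B_j$,
\[
\frac{\max_{z\in\D}|\phi_\omega'(z)|}{\min_{w\in\D}|\phi_\omega'(w)|}\le\prod_{j=1}^m\frac{\max_{B_j}|\phi_{\omega_j}'|}{\min_{B_j}|\phi_{\omega_j}'|}.
\]
Denoting by $A_{\omega_j}$ the pole of $\phi_{\omega_j}$ and by $\delta_j:=\dist(A_{\omega_j},B_j)$, formula \eqref{eq:mobius-deriv} shows that each factor equals $\big((\delta_j+2R^{(j)})/\delta_j\big)^2$, whence
\[
\log\frac{\max_{\D}|\phi_\omega'|}{\min_{\D}|\phi_\omega'|}\le 2\sum_{j=1}^m\log\Big(1+\frac{2R^{(j)}}{\delta_j}\Big).
\]

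The two quantities $R^{(j)}$ and $\delta_j$ then have to be controlled uniformly. For the radii I would first record the elementary consequence of \eqref{eq:mobius-deriv} that a M\"obius map $g$ sending $\D$ onto a disk of radius $R$ satisfies $\sup_\D|g'|=R\sqrt{K(g)}$, where $K(g)$ is the distortion of $g$ over $\D$; applied to the generators this gives $\sup_\D|\phi_{k,n}'|=\mbox{radius}\,\phi_{k,n}(\D)\cdot h(n)$, with $h(n)$ as in the proof of Proposition \ref{dist-dima}. Since the radii are decreasing in $n$ by Remark \ref{remk:radiidec} and $h(n)$ is decreasing, this is largest at $n=1$, so with $s:=\tfrac{2\lambda-3}{3}\,h(1)$ we get $\sup_\D|\phi_{k,n}'|\le s<1$ for all $(k,n)$. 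Because $B_j=\phi_{\omega_{j+1}}(B_{j+1})$ and $\phi_{\omega_{j+1}}$ is $s$-Lipschitz on the convex set $B_{j+1}\subset\D$, the radii contract geometrically, $R^{(j)}\le\tfrac{2\lambda-3}{3}\,s^{\,m-1-j}$ for $j\le m-1$. For the pole distances I would invoke the conformal extension \eqref{confext}: each $\phi_{k,n}$ is a diffeomorphism of $W=B(0,1+\lambda)$ into itself, so its pole lies outside $W$ and hence $\delta_j\ge\dist(A_{\omega_j},\D)\ge\lambda$ for every $j$.

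It remains to assemble these estimates. The innermost term ($j=m$, $B_m=\D$) is exactly the single-map distortion of $\phi_{\omega_m}$ over $\D$, bounded by $\log K_1$. For the remaining terms I use $\log(1+x)\le x$ together with the geometric decay to obtain
\[
\log\frac{\max_\D|\phi_\omega'|}{\min_\D|\phi_\omega'|}\le\log K_1+\frac{4}{\lambda}\sum_{j=1}^{m-1}R^{(j)}\le\log K_1+\frac{4}{\lambda}\cdot\frac{(2\lambda-3)/3}{1-s},
\]
and exponentiating, after substituting $\lambda=\sqrt3$ and $K_1\approx 3.53765$, produces a value below $5.900319$, which is \eqref{bdk}. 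The bounds \eqref{bdk>1} and \eqref{bdk>2} follow from the identical scheme restricted to words whose letters all satisfy $n\ge 2$, respectively $n\ge 3$: the innermost distortion is then at most $h(2)^2$, respectively $h(3)^2$, the maximal generator radius is $\tfrac{14\lambda-15}{121}$, respectively $\tfrac{26\lambda-21}{529}$, from Proposition \ref{prop:disks}, and the contraction rate $s$ decreases accordingly, yielding the smaller constants.

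I expect the main difficulty to be exactly the uniformity and sharpness of the two ingredients above. The naive product $\prod_j K_{\omega_j}$ diverges, so it is essential to isolate the single innermost map (handled exactly through $K_1$) from the geometrically small tail, and to verify simultaneously that $s<1$ and that the numerics are tight enough that the resulting geometric series lands below the target value $5.900319$ rather than merely below $K_1^2$. Establishing the uniform contraction $\sup_\D|\phi_{k,n}'|\le s<1$ and the uniform pole separation $\delta_j\ge\lambda$ across the entire infinite alphabet, both reduced to the single-generator facts of Proposition \ref{dist-dima} and \eqref{confext}, is the crux of the argument.
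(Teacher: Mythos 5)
Your proposal is correct, but it takes a genuinely different route from the paper's proof. The paper splits off only the innermost letter: writing $\phi_\om=g\circ\phi_{\om_m}$ with $g=\phi_{\om_1}\circ\dots\circ\phi_{\om_{m-1}}$, it bounds the distortion of $\phi_{\om_m}$ on $\D$ by $K_1$ (Proposition \ref{dist-dima}), notes that the two relevant points $\tilde z,\tilde w$ lie in the single small disk $\phi_{\om_m}(\D)$ of radius at most $\tfrac{2\lambda-3}{3}$ (resp.\ $\tfrac{14\lambda-15}{121}$, $\tfrac{26\lambda-21}{529}$ by Proposition \ref{prop:disks} and Remark \ref{remk:radiidec}), and then controls $|g'(\tilde z)/g'(\tilde w)|$ by one application of \emph{Koebe's distortion theorem}, which is applicable because \eqref{confext} makes $g$ univalent on the much larger ball $B(0,1+\lambda)$; multiplying $K_1$ by the resulting Koebe factors $1.6679$, $1.4238$, $1.234$ produces exactly the three stated constants. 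You instead telescope the distortion over \emph{every} letter and replace Koebe by explicit M\"obius geometry: \eqref{confext} forces each generator's pole outside $B(0,1+\lambda)$, hence at distance at least $\lambda$ from $\D$, so each factor is at most $\bigl(1+2R^{(j)}/\lambda\bigr)^2$, and the radii $R^{(j)}$ decay geometrically because of the uniform contraction $\sup_\D|\phi'_{k,n}|\le\tfrac{2\lambda-3}{3}h(1)\approx 0.291$, which you obtain from the (correct, and genuinely elementary) geometric-mean identity $\sup_\D|g'|=R\sqrt{K(g)}$ for M\"obius maps. Summing the series gives $K_1\exp\bigl(\tfrac{4}{\lambda}\cdot\tfrac{(2\lambda-3)/3}{1-s}\bigr)\approx 5.856<5.900319$, and the subsystem cases come out near $4.0$ and $3.5$, so your constants are in fact slightly sharper than the paper's. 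What each approach buys: the paper's argument is shorter and softer (univalence on the larger ball plus Koebe is all that is needed, with no contraction rate and no per-letter bookkeeping), while yours is fully self-contained and elementary (no distortion theorem at all) and exposes the mechanism — geometric decay of the nested image disks — explicitly.

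One small citation slip, which does not affect the conclusion: for \eqref{bdk>1} and \eqref{bdk>2} you take the innermost factor to be $h(2)^2$, resp.\ $h(3)^2$, on the grounds that "$h(n)$ is decreasing." The paper's Proposition \ref{dist-dima} only establishes the tabulated values $h(1)\ge h(2)\ge\dots\ge h(22)$ together with the tail bound $h(n)\le 1.877$ for $n\ge 23$, and $1.877>h(2)\approx 1.8069$, so monotonicity past $n=22$ is not actually available as stated. Your argument survives with the cruder bound $\sup_{n\ge 2}h(n)\le 1.877$: redoing your computation with innermost factor $1.877^2$ and contraction $s\le r\cdot 1.877$ gives roughly $4.33<5.03661$ and $3.96<4.3655$. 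So either quote the $1.877$ tail bound, or supply the short monotonicity argument for $h$, and the proof is complete.
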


\begin{proof} 
Let $z,w \in \D$. We will first prove \eqref{bdk}. Let $\om \in I^n$. If $n=1$, then \eqref{bdk} follows from Proposition \ref{dist-dima}. So we can assume that $|\om|>2$ and we set 
$g=\phi_{\om_1} \circ \dots \circ \phi_{\om_{n-1}}$.  Then, by Proposition \ref{dist-dima},
\begin{equation}
\label{eq:bdk-mar}
\frac{|\phi'_\om (z)|}{|\phi'_\om (w)|}=\frac{|(g \circ \phi_{\om_n})' (z)|}{|(g \circ \phi_{\om_n})' (w)|}=\left|\frac{g'( \phi_{\om_n}(z))}{g'( \phi_{\om_n}(w))}\right| \left| \frac{\phi'_{\om_n}(z)}{\phi'_{\om_n}(w)} \right| \leq K_{\mathcal{A}}\left|\frac{g'( \phi_{\om_n}(z))}{g'( \phi_{\om_n}(w)}\right|.
\end{equation}
We will now estimate 
$$\left|\frac{g'( \phi_{\om_n}(z))}{g'( \phi_{\om_n}(w))}\right|.$$
We consider two cases. First assume that $\om_1=(k,1)$ for some $k=1,\dots,6$. In that case, Proposition \ref{prop:disks}  implies that $\tilde{z}:=\phi_{\om_n}(z)$ and $\tilde{w}:=\phi_{\om_n}(w)$ lie in a disk $B(\xi,\frac{2 \lambda-3}{3})$ such that $\xi \in \partial B(0, \frac{4 \lambda-6}{3})$. It follows that $g$ is holomorphic (and injective) in $B(\xi,1+\lambda-|\xi|)$. By Koebe's Distortion Theorem, see e.g. \cite[Theorem 23.1.6]{MRUvol3}, (for $r=\frac{2 \sqrt{3}-3}{3}$ and $R=1+\lambda-\frac{4 \lambda-6}{3}$), we get
\begin{equation}
\label{eq:bdk-koebe}
\left|\frac{g'( \tilde{z})}{g'(\tilde{w})}\right|\leq \left(\frac{1+r/R}{1-r/R} \right)^4 \leq 1.6678634.
\end{equation}
Therefore,
\begin{equation}
\label{eq:bdkproof1}
\frac{|\phi'_\om (z)|}{|\phi'_\om (w)|} \overset{\eqref{eq:bdk-mar} \wedge \eqref{eq:bdk-koebe}}{\leq} 5.900319.
\end{equation}

If $\om_1=(k,m)$ for some $m>1$ and $k=1,\dots,6$ then Proposition \ref{prop:disks} and Remark \ref{remk:radiidec} imply that $\tilde{z},\tilde{w}$ lie in a disk $B(\zeta, \tilde{r})$ such that  $B(\zeta,\tilde{r}) \subset \D$ and 
$$
\tilde{r} \leq \frac{14\lambda-15}{121}.
$$
Thus we can apply Koebe's distortion theorem in the disk $B(\zeta, \lambda)$, since $B(\zeta, \lambda) \subset B(0,1+\lambda)$. Thus for $\tilde{R}=\lambda$
\begin{equation}
\label{eq:bdk-koebe1}
\left|\frac{g'( \tilde{z})}{g'(\tilde{w})}\right|\leq \left(\frac{1+\tilde{r}/\tilde{R}}{1-\tilde{r}/\tilde{R}} \right)^4 \leq 
1.42372,
\end{equation}
and
\begin{equation}
\label{eq:bdkproof2}
\frac{|\phi'_\om (z)|}{|\phi'_\om (w)|} \overset{\eqref{eq:bdk-mar} \wedge \eqref{eq:bdk-koebe1}}{\leq} 5.03661.
\end{equation}
Note that \eqref{bdk} follows by \eqref{eq:bdkproof1} and \eqref{eq:bdkproof2}. Moreover, \eqref{eq:bdkproof2} implies \eqref{bdk>1}.

It remains to show \eqref{bdk>2}. Note that in that case $\om_1=(k,m)$ for some $m>2$ and $k=1,\dots,6$. Therefore, Proposition \ref{prop:disks} and Remark \ref{remk:radiidec} imply that $\tilde{z},\tilde{w}$ lie in a disk $B(\eta, r')$ such that  $B(\eta,r') \subset \D$ and 
$$
r' \leq \frac{26\lambda - 21}{529}.
$$
Applying Koebe's distortion theorem in the disk $B(\eta, \lambda)$  (with $r'$ as above and $R'=\lambda$) gives
\begin{equation}
\label{eq:bdk-koebe2}
\left|\frac{g'( \tilde{z})}{g'(\tilde{w})}\right|\leq \left(\frac{1+r'/R'}{1-r'/R'} \right)^4 \leq 
1.234.
\end{equation}
Therefore
\begin{equation}
\label{eq:bdkproof3}
\frac{|\phi'_\om (z)|}{|\phi'_\om (w)|} \overset{\eqref{eq:bdk-mar} \wedge \eqref{eq:bdk-koebe1}}{\leq} 4.3655.
\end{equation}
We have thus established \eqref{bdk>2} and the proof of the proposition is complete.
\end{proof}

\section{The dimension spectrum of the Apollonian gasket}
\label{sec:proof}
Let $\mathcal{S}=\{\phi_i:X \to X\}_{i \in I}$ be a CIFS. If $F \subset I$ we will let $\cS_F=\{\phi_i: X \to X\}_{i \in F}$ and we will denote $J_{\cS_F}:=J_F$. We will also denote the topological pressure of the subsystem $\cS_F$ by $P_{\cS_F}:=P_F$. Recall that the dimension spectrum of $\cS$  is defined as 
$$DS(\cS)=\{\dim_H(J_F):F \subset I\}.$$
We now state some facts from \cite{dimspec} that will be essential in our approach. We say that a well ordering $\prec$ on the alphabet $I$ is {\em natural} (with respect to $\cS$) if
$$a \prec b \implies \| \f_a'\|_\infty \geq \| \f_b'\|_\infty.$$
Moreover, if $I=\{i_n\}_{n \in \N}$ is an enumeration respecting $\prec$ we will let $$I(m)=\{i_1,\dots,i_m\}.$$
The following proposition will be used repeatedly in our proof.
\begin{prop} \cite[Proposition 6.17]{dimspec}
\label{keypropoint}
Let $\cS=\{\f_i: X \to X\}_{i \in I}$ be a CIFS. Let $K$ as in \eqref{bdp} and let $I=\{e_n\}_{n \in \N}$ be an enumeration of $I$ according to a natural order $\prec$. Suppose that there exist $t_1 \leq  t_2 \leq h(\cS)$ and $d \in \N$ such that 
\begin{enumerate}
\item $P_{I(d)}(t_1) \leq 0$,
\item $\sum_{n \geq k+1} \| \f_{i_n}'\|_\infty^{t_2} \geq K^{2t_2} \|\f_{i_k}'\|^{t_2}_\infty$ for all $k \geq d+1$.
\end{enumerate}
Then $[t_1,t_2] \subset DS(\cS)$.
\end{prop}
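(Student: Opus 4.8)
The plan is to realize every $t \in [t_1,t_2]$ as $\dim_{H}(J_A)$ for a subsystem $A$ containing the finite core $I(d)$. By Theorem \ref{721} we have $\dim_{H}(J_A)=h(\cS_A)$ for every $A\subseteq I$, and since $t\mapsto P_A(t)$ is decreasing with $P_A(\dim_{H}J_A)=0$ in the relevant (finite‑pressure) range, it is equivalent to produce, for each target $t$, a set $A_t$ with $I(d)\subseteq A_t\subseteq I$ and $P_{A_t}(t)=0$. At every such $t$ two ``anchors'' are available: monotonicity of the pressure together with condition (1) gives $P_{I(d)}(t)\le P_{I(d)}(t_1)\le 0$, while $t\le t_2\le h(\cS)$ gives $P_{\cS}(t)=P_{I}(t)\ge 0$. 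Thus the value $0$ is caught between the pressure of the core and the pressure of the full system, and the whole problem is to interpolate between them by admissible subsystems.

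I would interpolate by switching tail letters on one at a time in the natural order. Fixing $t\in[t_1,t_2]$, set $A_d:=I(d)$ and, for $n\ge d+1$, put $A_n:=A_{n-1}\cup\{i_n\}$ whenever $P_{A_{n-1}\cup\{i_n\}}(t)\le 0$ and $A_n:=A_{n-1}$ otherwise; finally let $A_t:=\bigcup_n A_n$. Every $A_n$ satisfies $P_{A_n}(t)\le 0$ by construction, the base case being the anchor $P_{I(d)}(t)\le 0$, and since the pressure is continuous along increasing unions of alphabets (from the monotone convergence $Z_k(A_n,t)\uparrow Z_k(A_t,t)$ for each fixed $k$, a standard CIFS fact) the limit obeys $P_{A_t}(t)\le 0$.

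It remains to rule out $P_{A_t}(t)<0$, and this is exactly where condition (2) and the distortion constant $K$ enter. If some generator $i_k$ (necessarily with $k\ge d+1$) was skipped, then $P_{A_t\cup\{i_k\}}(t)\ge P_{A_{k-1}\cup\{i_k\}}(t)>0$ by monotonicity, so adjoining that one letter to $A_t$ would push the pressure from $P_{A_t}(t)$ up past $0$. The heart of the matter is therefore a quantitative bound on the pressure increment caused by a single tail letter: using the two‑sided comparison $\log Z_1(B,t)-t\log K\le P_B(t)\le \log Z_1(B,t)$ coming from quasi‑multiplicativity \eqref{quasi-multiplicativity1}, together with condition (2), which guarantees that the mass $\|\phi'_{i_k}\|_\infty^{t}$ of any tail letter is dominated, by the factor $K^{2t_2}$, by the surviving tail mass $\sum_{n>k}\|\phi'_{i_n}\|_\infty^{t}$. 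This forces every overshoot to be uniformly small and, crucially, guarantees that enough tail mass always remains to nudge the pressure up to $0$; hence the skips cannot trap $P_{A_t}(t)$ strictly below $0$, so $P_{A_t}(t)=0$ and $\dim_{H}(J_{A_t})=t$.

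The step I expect to be the main obstacle is precisely this last quantitative conversion: condition (2) is phrased purely in terms of the single‑letter norms $\|\phi'_{i_n}\|_\infty$, and turning it into an honest bound on the increment of the multiplicatively defined pressure—while checking that the exponent $K^{2t_2}$ (absorbing both the $Z_1$‑to‑pressure distortion and the passage from $t_2$ to $t\le t_2$) is the correct gauge so that the greedy limit lands exactly on $0$ rather than merely near it—is the delicate point. A technically lighter alternative, if the exact‑hit bookkeeping proves cumbersome, is to run the same one‑letter interpolation to show only that $DS(\cS)\cap[t_1,t_2]$ is \emph{dense} in $[t_1,t_2]$, with condition (2) ensuring that the gaps between consecutively realized dimensions shrink to $0$, and then to invoke the compactness of $DS(\cS)$ established in \cite{dimspec} to upgrade density to the full inclusion $[t_1,t_2]\subseteq DS(\cS)$. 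Either route yields the claim.
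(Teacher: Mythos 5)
First, a point of order: the paper does not prove Proposition \ref{keypropoint} at all; it imports it verbatim from \cite[Proposition 6.17]{dimspec}, so your attempt has to be measured against the argument given there, which is a greedy exhaustion argument in the spirit of Kesseb\"ohmer--Zhu \cite{KZ}. Your architecture coincides with it: start from $I(d)$, run through the tail letters in the natural order, adjoin $i_n$ exactly when the (finite-alphabet) pressure at the target parameter $t$ stays $\le 0$, pass to the union $A_t$, get $P_{A_t}(t)\le 0$ by the finite-subsystem approximation of pressure, and then try to rule out $P_{A_t}(t)<0$ using the skipped letters. Up to that point your proposal is correct and is the intended proof.

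The gap lies in the step you yourself flag as the obstacle, and it is genuine on two counts. First, the tools you allot for it do not suffice: from the two-sided comparison $\log Z_1(B,t)-t\log K\le P_B(t)\le\log Z_1(B,t)$ alone, the best available increment bound is $P_{E\cup\{b\}}(t)\le\log\bigl(K^{t}e^{P_E(t)}+\|\f_b'\|_\infty^{t}\bigr)$, and the factor $K^{t}$ multiplying $e^{P_E(t)}$ leaves an uncertainty window of fixed width $t\log K$ that no smallness of $\|\f_b'\|_\infty$ removes; with only this, a skipped letter forces merely $P_{A_t}(t)\ge -t\log K$, not $P_{A_t}(t)\ge 0$, so the pressure cannot be ``nudged'' to $0$. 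What the proof actually needs is the sharper estimate $P_{E\cup\{b\}}(t)\le\log\bigl(e^{P_E(t)}+K^{t}\|\f_b'\|_\infty^{t}\bigr)$, obtained by decomposing each word of $(E\cup\{b\})^n$ according to the positions of $b$ and applying \eqref{quasi-multiplicativity1} blockwise, together with a replacement lemma in which every occurrence of a skipped letter $i_k$ is substituted by arbitrary letters of the full tail $\{i_n:n>k\}$; condition (2) (transported from $t_2$ down to $t$ via the natural order, and with the exponent in $K^{2t}$ exactly compensating the two concatenation losses per substitution) is precisely what makes that replacement pressure-increasing. Second, you misread condition (2): it controls the mass of the \emph{whole} tail $\{i_n:n>k\}$, not the ``surviving tail mass'' inside $A_t$, and the two differ exactly at skipped letters. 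Accordingly the correct conclusion is a dichotomy, not the single mechanism you describe: if infinitely many letters are skipped, the sharp increment bound plus $\|\f_{i_k}'\|_\infty\to 0$ forces $P_{A_t}(t)\ge 0$ (condition (2) plays no role there); if at least one but only finitely many letters are skipped, applying the replacement lemma at the last skipped index --- the only place where the surviving tail coincides with the full tail --- gives $P_{A_t}(t)>0$, contradicting the construction, so that case never occurs; and if nothing is skipped then $A_t=I$ and $t=h(\cS)$. Your fallback (density of $DS(\cS)\cap[t_1,t_2]$ plus compactness of $DS(\cS)$ from \cite{dimspec}) is sound as logic, but establishing that density requires the same increment and replacement lemmas, so it does not circumvent the missing step.
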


We now turn our attention to the Apollonian system $\mathcal{A}$; see \eqref{eq:apolsystem}. Recall that the alphabet of $\mathcal{A}$ is
$$I=\{1,\dots,6\} \times \N.$$ We define the following well ordering of $I$:
\begin{equation}
\label{eq:prec}
(k_1,n_1) \prec (k_2, n_2) \mbox{ if and only if }  \begin{cases} n_1<n_2, \,\mbox{ or,}\\
n_1=n_2 \mbox{ and }k_1 \leq k_2\end{cases}.
\end{equation}
\begin{prop} The well ordering defined in \eqref{eq:prec} is natural, i.e. 
$$(k_1,n_1) \prec (k_2, n_2) \implies \|\phi'_{k_1,n_1}\|_\infty \geq \|\phi'_{k_2,n_2}\|_\infty.$$
\end{prop}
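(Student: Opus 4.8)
The plan is to reduce the stated implication to two claims about the quantity $\rho_n:=\|\phi'_{k,n}\|_\infty$: that $\rho_n$ is \emph{independent of $k$}, and that $n\mapsto\rho_n$ is \emph{non-increasing}. Granting these, the proposition is immediate. If $(k_1,n_1)\prec(k_2,n_2)$ with $n_1<n_2$, then $\|\phi'_{k_1,n_1}\|_\infty=\rho_{n_1}\ge\rho_{n_2}=\|\phi'_{k_2,n_2}\|_\infty$ by monotonicity; and if $n_1=n_2$ (so that $k_1\le k_2$), the two norms simply coincide.

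For the independence of $k$, I would note that since $\phi_{k,n}=R_{\theta_k'}\circ f^n\circ R_{\theta_k}\circ f$ and the rotations are isometries, the chain rule gives $|\phi'_{k,n}(z)|=|(f^n)'(R_{\theta_k}(f(z)))|\,|f'(z)|$, so the outer rotation $R_{\theta_k'}$ plays no role; moreover $R_{\theta_k}$ takes only the two values $R_{\pm 2\pi/3}$, and the relation $F_o(z)=\overline{F_e(\bar z)}$ recorded in the proof of Proposition~\ref{prop:disks} shows that the two parities of $k$ produce maps with identical $|(\cdot)'|$. In practice I would instead read the independence off the explicit derivative formula below.

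Building on the matrix computation in the proof of Proposition~\ref{dist-dima}, $\phi_{k,n}$ admits a matrix representation whose bottom row is $(c,d)$ with $c=-e^{i\theta_k}(\lambda-1)-(1+\tfrac\lambda n)$ and $d=-e^{i\theta_k}+(1+\tfrac\lambda n)(\lambda+1)$, and whose determinant has modulus $\lambda^4/n^2$. Combining \eqref{eq:mobius-deriv} with the elementary fact used there, that $\min_{z\in\D}|cz+d|=|d|-|c|$, yields
\begin{equation*}
\rho_n=\frac{\lambda^4/n^2}{(|d|-|c|)^2}=\frac{\lambda^4}{q(n)^2},\qquad q(n):=n\big(|d|-|c|\big).
\end{equation*}
Since $e^{i\theta_k}=-\tfrac12\pm\tfrac\lambda2 i$, the moduli $|c|,|d|$ are the same for both parities, which re-proves the independence of $k$; expanding them gives
\begin{equation*}
q(n)=\sqrt{\big((\tfrac32+\lambda)n+\lambda(\lambda+1)\big)^2+\tfrac34 n^2}-\sqrt{\big(\tfrac{3-\lambda}2 n+\lambda\big)^2+\tfrac34(\lambda-1)^2 n^2}.
\end{equation*}

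The genuinely substantive step, and the main obstacle, is to show that $q(n)$ is increasing, equivalently that $\rho_n$ is decreasing. I would treat $n$ as a real variable $t\ge 1$ and differentiate. Each radical has the form $\sqrt{(\alpha t+\beta)^2+\gamma t^2}$ with $\alpha,\beta,\gamma>0$; a short computation shows that its derivative $\tfrac{(\alpha^2+\gamma)t+\alpha\beta}{\sqrt{(\alpha^2+\gamma)t^2+2\alpha\beta t+\beta^2}}$ increases in $t$ and stays strictly below its asymptotic slope $\sqrt{\alpha^2+\gamma}$, since the square of this ratio equals $(\alpha^2+\gamma)-\tfrac{\gamma\beta^2}{(\alpha^2+\gamma)t^2+2\alpha\beta t+\beta^2}$. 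Consequently the derivative of the subtracted radical never exceeds $\sqrt{(\tfrac{3-\lambda}2)^2+\tfrac34(\lambda-1)^2}\approx 0.90$, whereas the derivative of the first radical is at least its value at $t=1$, namely $\approx 3.3$; hence $q'(t)>0$ for all $t\ge 1$. This final estimate is where the work lies—everything preceding it is organizational, resting on the computations already carried out in Propositions~\ref{prop:disks} and~\ref{dist-dima}.
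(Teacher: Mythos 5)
Your proof is correct, and it takes a genuinely different route from the paper's. The paper does not re-derive the norm at all: it quotes the closed form \eqref{eq: estimate for Dphi} from \cite{chousionis2024rigoroushausdorffdimensionestimates} (from which independence of $k$ is immediate), and proves monotonicity in $n$ by differentiating $G$, reducing the sign of $dG/dn$ to an inequality $H(n)<1$, and verifying that inequality by a monotone bound valid for $n\geq 3$ together with direct evaluation at $n=1,2$. You instead compute $\|\phi'_{k,n}\|_\infty$ from scratch out of the matrix representation in the proof of Proposition \ref{dist-dima}, using \eqref{eq:mobius-deriv}, $|\det|=\lambda^4/n^2$, and $\min_{z\in\overline{\D}}|cz+d|=|d|-|c|$, and you get monotonicity from the slope-comparison observation for $t\mapsto\sqrt{(\alpha t+\beta)^2+\gamma t^2}$ (derivative increasing in $t$ and bounded by the asymptotic slope $\sqrt{\alpha^2+\gamma}$), so that the single comparison $q'(t)\geq 3.3-0.9>0$ for $t\ge 1$ finishes the proof. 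I checked your algebraic identity for the squared derivative, your expansion of $q(n)$ against the entries $c,d$ appearing in Proposition \ref{dist-dima}, and the two numerical values; all are correct, so the argument is complete. Your version is self-contained and needs no finite case-checking, while the paper's is shorter granted the citation.

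One finding is worth flagging: your formula and \eqref{eq: estimate for Dphi} do not agree, and yours is the correct one. At $n=1$ your expression gives $\|\phi'_{k,1}\|_\infty=9/\big(\sqrt{33+18\lambda}-\sqrt{6}\,\big)^2\approx 0.291$, whereas the right-hand side of \eqref{eq: estimate for Dphi} equals $(21+12\lambda)/\big(\sqrt{66+36\lambda}-\lambda\big)^2\approx 0.454$. Your value is forced by \eqref{eq:mobius-deriv}, and it is consistent with the paper's own computations: it reproduces the distortion ratio $K_{k,1}=\big((|d|+|c|)/(|d|-|c|)\big)^2=h(1)^2\approx 3.5377$ of Proposition \ref{dist-dima}, and via $\mbox{radius}=|\det|/(|d|^2-|c|^2)=9/(27+18\lambda)=(2\lambda-3)/3$ it reproduces Proposition \ref{prop:disks}. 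The printed right-hand side of \eqref{eq: estimate for Dphi} is in fact the chain-rule product $\|f'\|_{\infty,\overline{\D}}\cdot\|(f^n)'\|_{\infty,R_{\theta_k}f(\overline{\D})}$, which strictly overestimates the supremum because the two factors are maximized at incompatible points. Since both expressions are independent of $k$ and decreasing in $n$, the Proposition's conclusion is unaffected; but strictly speaking the paper's proof rests on an identity that fails as stated (and the lower bound $0.45/n^2$ in \eqref{eq:derbound}, derived from it, fails at $n=1$, though it does hold for $n\geq 2$, which is all the subsequent tail estimates use). Your derivation closes that gap rather than inheriting it.
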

\begin{proof} In \cite{chousionis2024rigoroushausdorffdimensionestimates},  we showed that for all $k=1,\dots,6,$ and $n \in \N$
\begin{equation}\label{eq: estimate for Dphi}
\|\phi'_{k,n}\|_\infty= \frac{\lambda^2}{n^2}\frac{(2+\lambda)^2}{\left(|-1+\lambda i-(1+\lambda/n)(2+\lambda)|-\lambda\right)^2}:=\lambda^2 (2+\lambda)^2 \, G(n).
\end{equation}
Therefore, it suffices to show that $G: \N \to \N$ is decreasing. We first compute the derivative of $G$:

$$
\begin{aligned}
\frac{d G}{dn}&=\frac{2 \lambda\,  \left(2+\lambda\, \right) \left(\left(2+\lambda\, \right) \left(\frac{\lambda}{n}+1\right)+1\right)}{\left(\sqrt{\left(\left(2+\lambda\,
\right) \left(\frac{\lambda}{n}+1\right)+1\right)^2+3}\, -\lambda\, \right)^3 \sqrt{\left(\left(2+\lambda\, \right) \left(\frac{\lambda}{n}+1\right)+1\right)^2+3}\,
 n^4}\\
  &\quad\quad\quad\quad\quad-\frac{2}{\left(\sqrt{\left(\left(2+\lambda\, \right) \left(\frac{\lambda}{n}+1\right)+1\right)^2+3}\, -\lambda\, \right)^2 n^3}.
\end{aligned}
$$
To show that the derivative is negative, it is sufficient to show that 
$$
H(n):=\frac{ \lambda\,  \left(2+\lambda\, \right) \left(\left(2+\lambda\, \right) \left(\frac{\lambda}{n}+1\right)+1\right)}{n\left(\sqrt{\left(\left(2+\lambda\,
\right) \left(\frac{\lambda}{n}+1\right)+1\right)^2+3}\, -\lambda\, \right) \sqrt{\left(\left(2+\lambda\, \right) \left(\frac{\lambda}{n}+1\right)+1\right)^2+3}\,
 }<1
$$
for all $n\geq 1$.
Using that for $n_{min}\le n<\infty$ the estimate
$$
H(n)\le \frac{ \lambda\,  \left(2+\lambda\, \right) \left(\left(2+\lambda\, \right) \left(\frac{\lambda}{n_{min}}+1\right)+1\right)}{n_{min}\left(\sqrt{\left(\left(2+\lambda\,
\right)+1\right)^2+3}\, -\lambda\, \right) \sqrt{\left(\left(2+\lambda\, \right) +1\right)^2+3}\,
 }<1
$$
already holds for $n_{min}=3$, by checking directly for $n=1,2$, we find  
\begin{verbatim}
H(1) =  0.665616704035170
H(2) =  0.492071634601904.
\end{verbatim}
The proof is complete.
\end{proof}

Since 
$$
|-1+\lambda i-(2+\lambda)|\le |-1+\lambda i-(1+\lambda/n)(2+\lambda)|\le |-1+\lambda i-(1+\lambda)(2+\lambda)|,
$$
\eqref{eq: estimate for Dphi} implies that for any  $(k,n) \in I$:
\begin{equation}
\label{eq:derbound}
\frac{0.45}{n^2} < \|\phi'_{k,n}\|_\infty< \frac{3.821}{n^2}.
\end{equation}
Therefore, \eqref{eq:thetaz1} implies that
\begin{equation}
\label{eq:thetaapol}
\theta({\mathcal{A}})=\frac{1}{2}.
\end{equation}
\begin{theorem}
\label{mainthm}The Apollonian system $\mathcal{A}$ has full dimension spectrum; i.e.
$$DS(\mathcal{A})=[0, \dim_H(J_{\mathcal{A}})].$$
\end{theorem}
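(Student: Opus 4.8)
The plan is to split the spectrum into its lower and upper parts and handle them separately. Since $\theta(\mathcal{A})=1/2$ by \eqref{eq:thetaapol}, the results of \cite{MUapol, MUCIFS} already give $[0,1/2)\subset DS(\mathcal{A})$, while $h:=\dim_H(J_{\mathcal{A}})\in DS(\mathcal{A})$ trivially (take the full alphabet). It therefore suffices to prove $[1/2,h]\subset DS(\mathcal{A})$, because then the union of the two pieces is exactly $[0,h]$. To produce $[1/2,h]$ I would invoke Proposition \ref{keypropoint} repeatedly, using the natural order \eqref{eq:prec} and enumerating $I=\{e_n\}_{n\in\N}$ so that the six generators at level $n$ appear consecutively.

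The mechanism of a single application is as follows. For a finite initial segment $I(d)$ the pressure $P_{I(d)}$ is strictly decreasing with unique zero $\dim_H(J_{I(d)})$, so condition (1) is equivalent to $t_1\ge \dim_H(J_{I(d)})$; thus verifying it only requires a rigorous \emph{upper} bound on the finite–system dimension, which is exactly what the effective method of \cite{chousionis2024rigoroushausdorffdimensionestimates} supplies. Condition (2) is a tail–domination inequality that can be checked using only the explicit estimates \eqref{eq:derbound}: since $\|\phi'_{k,n}\|_\infty\asymp n^{-2}$ uniformly in $k$ and there are exactly six generators at each level, the tail sum behaves like $6\sum_{m>N}m^{-2t_2}$ while the subtracted term is of order $N^{-2t_2}$, so for $t_2>1/2$ their ratio grows linearly in $N$ and the binding case reduces to the values of $k$ at the ends of the level blocks. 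The distortion constant enters condition (2) only through the factor $K^{2t_2}$; here the sharp bounds of Proposition \ref{distprop} are decisive, and the refined tail constants $K_{\mathcal{A}_{n>1}}$, $K_{\mathcal{A}_{n>2}}$ become available by passing to the subsystems $\mathcal{A}_{n>1}$, $\mathcal{A}_{n>2}$ (whose spectra embed in $DS(\mathcal{A})$) once the initial segment absorbs all generators with $n=1$, respectively $n\le 2$. Each valid application thus contributes the interval $[\dim_H(J_{I(d)}),\,t_2]\subset DS(\mathcal{A})$.

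The intervals are then chained by a top–down bootstrap. The first application targets $t_2=h$ with the sharp constant $K_{\mathcal{A}}$; condition (2) forces a large $d$, so $\dim_H(J_{I(d)})$ is close to $h$ and the resulting interval $[t_1^{(1)},h]$ is small but anchors the top of the spectrum. At stage $j$ I set $t_2^{(j)}=t_1^{(j-1)}$: since the target has dropped, the tail sums $\sum m^{-2t_2}$ are larger, condition (2) now holds with a strictly smaller $d_j$, and hence $t_1^{(j)}=\dim_H(J_{I(d_j)})$ is strictly smaller, extending the covered region downward. Consecutive intervals overlap at the shared endpoint $t_1^{(j-1)}$, and the requirement $t_1^{(j)}\le t_2^{(j)}$ holds automatically by monotonicity of $d\mapsto\dim_H(J_{I(d)})$ together with $d_j<d_{j-1}$. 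Eighteen steps bring $t_1^{(18)}$ below $1/2$, so the chain covers $[1/2,h]$ and, combined with the Mauldin--Urba\'nski interval, yields $DS(\mathcal{A})=[0,h]$. The numerical inputs—the finite–system dimensions $\dim_H(J_{I(d)})$ and the value of $h$—are produced rigorously by \cite{chousionis2024rigoroushausdorffdimensionestimates}.

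The main obstacle is the very first step. Near $t_2=h$ the tail series converges and condition (2) is genuinely tight: the linear-in-$N$ growth of the ratio carries a small leading constant (essentially proportional to $1/(2t_2-1)$ evaluated at $t_2=h\approx 1.3057$), so a crude distortion bound would force $d$ to be enormous and push $t_1^{(1)}$ to within a hair of $h$, producing a first interval too thin to seed the chain. It is precisely the bound $K_{\mathcal{A}}\le 5.900319$ of Proposition \ref{distprop}, reinforced by the refined tail constants \eqref{bdk>1}--\eqref{bdk>2} at the deeper stages, that keeps $d$ manageable and makes the opening interval wide enough for the bootstrap to gain traction. Once past this step, the remaining applications operate at $t_2<h$ with ever-larger tail sums and amount to verifying the two explicit inequalities of Proposition \ref{keypropoint} at each stage.
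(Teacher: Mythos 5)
Your skeleton agrees with the paper's proof: $[0,1/2)$ comes from Mauldin--Urba\'nski via $\theta(\mathcal{A})=1/2$, and $[1/2,\dim_H(J_{\mathcal{A}})]$ is to be produced by bootstrapping Proposition \ref{keypropoint} with rigorous numerics and the distortion bound of Proposition \ref{distprop}. The genuine gap is that your bootstrap runs only over initial segments $I(d)$ of the \emph{fixed} ambient system $\mathcal{A}$, and that iteration provably stalls almost immediately. Progress per step equals $\dim_H(J_{I(d_{j-1})})-\dim_H(J_{I(d_j)})$, which is positive only if the threshold in condition (2) actually drops when $t_2$ is lowered by the previous step's (tiny) gain; but the threshold is integer-valued and locally constant in $t_2$. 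The paper's own data exhibits the stall: at $t_2=1.3057$ the threshold is $N=27$ levels, giving $t_1^{(1)}\le 1.3001$; at $t_2=1.3001$ it is $N=26$, giving $t_1^{(2)}\le 1.3000$; but at $t_2=1.3000$ the threshold is \emph{still} $N=26$ (Step 3 of the paper), so your next step returns $t_1^{(3)}=\dim_H(J_{\mathcal{A}_{n\le 25}})=t_2^{(3)}$ — a fixed point near $1.30$, nowhere near $1/2$. Your assertion that ``condition (2) now holds with a strictly smaller $d_j$'' fails exactly here. Nor does the later switch of ambient system rescue the chain: applying Proposition \ref{keypropoint} inside $\mathcal{A}_{n>1}$ or $\mathcal{A}_{n>2}$ forces $t_2\le\dim_H(J_{\mathcal{A}_{n>1}})$, resp.\ $\dim_H(J_{\mathcal{A}_{n>2}})$, which are far below $1.30$ (the paper's tables give lower bounds $\approx 1.1$ and $1.049$), so those intervals cannot overlap a chain stuck at $\approx 1.30$. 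Incidentally, your use of the refined constants is also backwards: within the fixed ambient $\mathcal{A}$ the constant in condition (2) must remain $K_{\mathcal{A}}$, since the subsystems produced contain the level-$1$ maps; $K_{\mathcal{A}_{n>2}}$ is legitimate only when the \emph{whole} system in play omits the levels $n\le 2$, not ``once the initial segment absorbs'' them.

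The missing idea — and the actual content of the paper's $18$ steps (Corollary \ref{usefulcoro} and Table \ref{tab:main}) — is to run each step inside a different \emph{infinite proper} subsystem $\mathcal{A}_F$, where $F$ omits a carefully chosen interior block of levels: $F=\N\setminus\{11,12\}$, then $\N\setminus\{11,\dots,15\}$, \dots, finally $\N\setminus\{1,2,3,4\}$. Deleting such a block barely lowers the dimension of the infinite system, so $\dim_H(J_{\mathcal{A}_F})$ stays above the previously covered endpoint $t_2$ (this is the requirement $t_2\le D(F)$), and the tail condition \eqref{eq: condition on N} is unaffected because the omitted levels lie below $\max\tilde F$; yet the same deletion pushes the dimension of the finite initial segment $\tilde F$ of $F$ well below $t_2$. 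Each application therefore harvests a genuine interval (about $0.005$ wide at the top, widening to $\sim 0.37$ at step 18, where $K_{\mathcal{A}_{n>2}}$ applies because $F\subset\{n\ge 3\}$), and the chain descends below $1/2$. Without the freedom to vary $F$, the two-inequality mechanism you set up cannot get from $\dim_H(J_{\mathcal{A}})$ down to $1/2$.
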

\begin{proof} We first note that since $\theta(\mathcal{A})=1/2$, \cite[Theorem 6.2]{MUCIFS} implies that
\begin{equation}
\label{eq:muthetspe}
[0,1/2) \subset DS(\mathcal{A}).
\end{equation}

We will use several subsystems of $\mathcal{A}$ of the form $\mathcal{A}_S$ where $S \subset \N$; recall \eqref{eq:subsystemsofA} for their definition. We let $E=\{e_n\}_{n \in \N}$ be an enumeration of the alphabet $I=\{(k,n):k=1,\dots,6, \quad n \in\N\}$ with respect to the natural order $\prec$ defined in \eqref{eq:prec}. Since, by \eqref{eq: estimate for Dphi}, $\|\phi'_{k,n}\|$ does not depend on $k$ we will let $\phi_n:=\phi_{k,n}, k=1,\dots, 6$. Note that
\begin{equation}
\label{eq:fracpartder}
\|\phi'_{e_m}\|_\infty=\|\phi'_{\left\lceil m/6 \right\rceil}\|_\infty
\end{equation}
were $\left\lceil x \right\rceil$ denotes the least integer  greater than or equal to $x$. 
We will repeatedly use the following corollary of Proposition \ref{keypropoint}.
\begin{cor}
\label{usefulcoro}Let $F \subset \N$ and let $\tilde{F}$ be a finite subset of $F$. Let $K_F$ be a distortion constant for $\mathcal{A}_{F}$. If $0<t_1<t_2$ and
\begin{enumerate}
\item $\dim_H (J_{\mathcal{A}_F})\leq t_1$,
\item $6 \sum_{n=M+1}^\infty \|\phi'_n\|^{t_2}_{\infty} \geq K^{2t_2}_{F} \|\phi'_M\|^{t_2}_\infty \mbox{ for all }M \geq \max \tilde{F}+1,$
\end{enumerate}
then $[t_1,t_2] \subset DS(\mathcal{A_{\tilde{F}}})\subset DS(\mathcal{A})$.
\end{cor}
\begin{proof} We let
$$I_{F}=\{(k,n): k=1,\dots, 6, \quad n \in F\}.$$
Let $(\epsilon_i)_{i \in \N}$ be an  enumeration of $I_F$ with respect to the natural order $\prec$, as in \eqref{eq:prec}.
Clearly, $\{1,\dots, 6\} \times \tilde{F}$ corresponds to the initial segment $I(q)$ of $\tilde{I}$ where $q= 6 \cdot \sharp F$. Therefore, our claim will follow by Proposition \ref{keypropoint} if we show that
$$\sum_{n=d+1}^\infty \|\phi'_{\epsilon_n}\|^{t_2}_\infty \geq K_{F}^{2t_2} \|\phi'_{\epsilon_d}\|^{t_2}_\infty \mbox{ for all }d \geq q+1.$$
Let $d \geq q+1$. Note that $\left\lceil d/6 \right\rceil \geq \max \tilde{F}+1$. Therefore,
$$\sum_{n=d+1}^\infty \|\phi'_{\epsilon_n}\|^{t_2}_\infty \geq 6 \sum_{m=\left\lceil d/6 \right\rceil+1}^\infty \|\phi'_{m}\|^{t_2}_\infty \geq K_{F}^{2t_2} \|\phi'_{\left\lceil d/6 \right\rceil}\|^{t_2}_\infty\overset{\eqref{eq:fracpartder}}{=}K_{F}^{2t_2} \|\phi'_{e_d}\|^{t_2}_\infty.$$
\end{proof}
By the integral test
\begin{equation}
\begin{split}
\label{eq:inttest}
\sum_{k=1}^6\sum_{n=N+1}^\infty \|\phi'_{k,n}\|_\infty^t &\overset{\eqref{eq: estimate for Dphi}}{=} 6\sum_{n=N+1}^\infty \|\phi'_{n}\|_\infty^t\overset{\eqref{eq:derbound}}{\geq} \sum_{n=N+1}^\infty\frac{6\,(0.45)^t}{n^{2t}}\\
&\geq 6*(0.45)^t\int_{N+2}^\infty\frac{dx}{x^{2t}}= \frac{6\,(0.45)^t}{(2t-1)(N+1)^{2t-1}}.
\end{split}
\end{equation}
By \eqref{eq:inttest} and the fact that the function $x \to \frac{x^{2t}}{(x+1)^{2t-1}}$ is increasing for $x>0$ we deduce that
\begin{equation}
\label{eq:simplifiedsumcond}
\frac{6\,(0.45)^t}{(2t-1)(N+1)^{2t-1}}\geq K^{2t}\frac{(3.821)^t}{N^{2t}}
\end{equation}
implies 
\begin{equation}
\label{eq: condition on N}
\sum_{n=M+1}^\infty \|\phi'_n\|^t_\infty \geq K^{2t} \|\phi'_M\|^t_\infty \mbox{ for all }M \geq N.
\end{equation}
Moreover, since the function
$t \to \frac{6\, (N+1)}{2t-1} \left(\left(\frac{N}{N+1}\right)^2 \frac{0.45}{3.821 \, K^2 } \right)^t$ 
is decreasing for $t>1/2$, if $t>s>1/2$
\begin{equation}
\label{eq:decrt}
\frac{6\,(0.45)^t}{(2t-1)(N+1)^{2t-1}}\geq K^{2t}\frac{(3.821)^t}{N^{2t}} \implies \frac{6\,(0.45)^s}{(2s-1)(N+1)^{2s-1}}\geq K^{2s}\frac{(3.821)^s}{N^{2s}}.
\end{equation}

We will now use Corollary \ref{usefulcoro} in order to find a chain of intervals in $DS(\mathcal{A})$ whose union contains $[1/2, \dim_{H}(J_{\mathcal{A}})]$. Starting with an interval which contains $\dim_{H}(J_{\mathcal{A}})$ we will use a bootstrapping method in order to reach $1/2$ with overlapping intervals in $DS(\mathcal{A})$. Our computational method from \cite{chousionis2024rigoroushausdorffdimensionestimates} is absolutely essential in this approach and will be applied repeatedly to derive lower and upper bounds for the Hausdorff dimension of various subsystems of $\mathcal{A}$, see Table \ref{tab:updimest}. For steps 1-16 we will use the distortion constant $K=K_{\mathcal{A}}=5.900319$ as in \eqref{bdk}. For steps 17 and 18  we will work with the dimension spectrum $\mathcal{A}_{n>2}$, so we will use the improved distortion constant  $$K_2:=K_{\mathcal{A}_{n >2}}=4.3655,$$
as in \eqref{bdk>2}. We describe in detail the first $4$ steps.

\textbf{Step 1.} We know that
$$\dim_H(J_{\mathcal{A}})<1.3057:=t_2.$$ For $t_2=1.3057$, \eqref{eq:simplifiedsumcond} holds for $N=454$. We then check that
$$
K^{2t_2}\|\phi'_{M}\|^{t_2}_\infty \le 6\sum_{n=M+1}^{10^7} \|\phi'_{n}\|_\infty^{t_2}$$
holds for $N \geq 27$. Thus, \eqref{eq: condition on N} holds for $t_2=1.3057$ and $N=27$. Applying the method we developed in \cite{chousionis2024rigoroushausdorffdimensionestimates} we obtain $$
\dim_H(\mathcal{A}_{n\le 26})\leq  1.3001:=t_1.
$$
Therefore, Corollary \ref{usefulcoro} implies that
\begin{equation}
\label{eq:step1}
[1.3001,\operatorname{dim}_H(\mathcal{A})]\subset {DS}(\mathcal{A}).
\end{equation}

{\bf Step 2.} Arguing as in Step 1, and using  \eqref{eq:decrt}, we find that condition \eqref{eq: condition on N} holds for $t_2=1.3001$ and $N=26$. We then find that 
$$
\dim_H(\mathcal{A}_{n\le 25})\leq  1.3000.
$$
Therefore, Corollary \ref{usefulcoro} implies that
$$
[1.3,1.3001]\subset {DS}(\mathcal{A}),
$$
which combined with \eqref{eq:step1} implies that
\begin{equation}
\label{eq:step2}
[1.300,\operatorname{dim}_H(\mathcal{A})]\subset {DS}(\mathcal{A}).
\end{equation}

{\bf Step 3.} In this step we consider the subsystem $\mathcal{A}_{n \neq 11,12}$, i.e. the Apollonian subsystem with alphabet $\{(k,n): k=1,\dots, 6, \, n\in \N \setminus \{ 11,12\}\}$, whose Hausdorff dimension satisfies:  
$$
\operatorname{dim}_\mathcal{H}(\mathcal{A}_{n\neq 11,12})\geq  1.3.
$$
Arguing as in Step 1, and using  \eqref{eq:decrt}, we find that condition \eqref{eq: condition on N} holds for $t_2=1.300$ and $N=26$. Moreover,
$$
\operatorname{dim}_\mathcal{H}(\mathcal{A}_{F_3})\leq  1.295,
$$
where $F_3=\{n\in \N: n\leq 25, n \neq 11,12\}$. Therefore, Corollary \ref{usefulcoro} implies that
$$
[1.295,1.3]\subset {DS}(\mathcal{A}),
$$
which combined with \eqref{eq:step2} implies that
\begin{equation}
\label{eq:step3}
[1.295,\operatorname{dim}_H(\mathcal{A})]\subset {DS}(\mathcal{A}).
\end{equation}

{\bf Step 4.} In this step we consider the subsystem $\mathcal{A}_{n \neq 11,12,13,14,15}$ whose dimension satisfies: 
$$
\operatorname{dim}_\mathcal{H}(\mathcal{A}_{n\neq \{11,12,13,14,15\}})\geq  1.296.
$$
Arguing as in Step 1, and using  \eqref{eq:decrt}, we find that condition \eqref{eq: condition on N} holds for $t_2=1.295$ and $N=26$. 
Moreover,
$$
\operatorname{dim}_\mathcal{H}(\mathcal{A}_{F_4})\geq  1.2901,
$$
where $F_4=\{n=1,\dots,10,16,\dots,25\}$. Therefore, Corollary \ref{usefulcoro} implies that
$$
[1.295,1.3]\subset {DS}(\mathcal{A}),
$$
which combined with \eqref{eq:step3} implies that
\begin{equation}
\label{eq:step4}
[1.2901,\operatorname{dim}_H(\mathcal{A})]\subset {DS}(\mathcal{A}).
\end{equation}
We continue in the same way until we obtain an interval which contains $1/2$. The details of the remaining steps (5--18) can be found in Table \ref{tab:main}.
\begin{table}[h]
\begin{center}
\begin{tabular}{|c|c|c|c|c|c|c|}
\hline
step &  $F$ & $D(F)$ & $\tilde{F}$ & $t_1(\geq \dim_{H}(J_{\mathcal{A}_{\tilde{F}}}))$ & $t_2$ & interval in $DS(\mathcal{A})$ \\
\hline 
1 &  $\N$  &1.3057&$\{n\le 26\}$  & 1.3001  & 1.3057 & $[1.3001, \operatorname{dim}_H(\mathcal{A})]$ \\
2 &  $\N$  &1.3057&  $\{n\le 25\}$  & 1.3000  &1.3001 & $[1.3000, \operatorname{dim}_H(\mathcal{A})]$ \\
3 & $\N \setminus \{11,12\}$ &1.300 & $\{n \leq 25\} \setminus \{11,12\}$  & 1.2950 & 1.3001  & $[1.2950, \operatorname{dim}_H(\mathcal{A})]$ \\
4 & $\N \setminus \{11,\dots,15\}$ & 1.296& $\{n \leq 25\}\setminus \{11,\dots,15\}$  & 1.2901  & 1.2950 &$[1.2901, \operatorname{dim}_H(\mathcal{A})]$ \\
5 & $\N \setminus \{11,\dots,20\}$  &1.291 &$\{ n \leq 24\} \setminus \{11,\dots,20\}$  & 1.2850 & 1.2901 &  $[1.2850, \operatorname{dim}_H(\mathcal{A})]$ \\
6 & $\N \setminus \{7,\dots,10\}$ &1.285& $\{n \leq 23\} \setminus\{7,\dots,10\}$  & 1.2775  & 1.2850 &  $[1.2775, \operatorname{dim}_H(\mathcal{A})]$ \\
7 & $\N \setminus \{6,\dots, 9\}$&1.278& $\{n \leq 23\} \setminus\{6,\dots, 9\}$  & 1.2700  & 1.2775 &  $[1.2700, \operatorname{dim}_H(\mathcal{A})]$ \\
8 & $\N \setminus \{6,\dots, 11\}$ &1.271& $\{n \leq 22\} \setminus\{6,\dots, 11\}$  & 1.2618  & 1.2700 & $[1.2618, \operatorname{dim}_H(\mathcal{A})]$ \\
9 & $\N \setminus \{5,\dots, 9\}$ &1.262& $\{n \leq 21\} \setminus\{5,\dots, 9\}$  & 1.2508  & 1.2618 &  $[1.2508, \operatorname{dim}_H(\mathcal{A})]$ \\
10 & $\N \setminus \{5,\dots, 12\}$ & 1.251 & $\{n \leq 20\} \setminus\{5,\dots, 12\}$  & 1.2383  & 1.2508 &  $[1.2383, \operatorname{dim}_H(\mathcal{A})]$ \\
11 & $\N \setminus \{4,\dots, 8\}$ &1.240& $\{n \leq 18\} \setminus\{4,\dots, 8\}$  & 1.2240  & 1.2383 & $[1.2240, \operatorname{dim}_H(\mathcal{A})]$ \\
12 & $\N \setminus \{3,4,5\}$ &1.2248&  $\{n \leq 17\} \setminus\{3,4,5\}$  & 1.2053  & 1.2240 & $[1.2035, \operatorname{dim}_H(\mathcal{A})]$ \\
13 & $\N \setminus \{3,4,5,6\}$  &1.211& $\{n \leq 15\} \setminus\{3,4,5,6\}$  & 1.1851  &1.2053  & $[1.185, \operatorname{dim}_H(\mathcal{A})]$ \\
14 & $\N \setminus \{2,3\}$ &1.186& $\{n \leq 14\} \setminus\{2,3\}$  & 1.1560  & 1.1851  & $[1.156, \operatorname{dim}_H(\mathcal{A})]$ \\
15 & $\N \setminus \{2,3,4\}$&1.1561& $\{n \leq 12\} \setminus\{2,3,4\}$  &  1.1080 &   1.1560 & $[1.108, \operatorname{dim}_H(\mathcal{A})]$ \\
16 & $\N \setminus \{1,5\}$  &1.110& $\{n \leq 9\} \setminus\{1,5\}$  &  1.0360 & 1.1080  & $[1.0360, \operatorname{dim}_H(\mathcal{A})]$ \\
$17^{\ast}$& $\N \setminus \{1,2\}$ &1.049& $\{n=3,4,5\}$  &  0.8261 & 1.0360 & $[0.8261, \operatorname{dim}_H(\mathcal{A})]$ \\
$18^{\ast}$& $\N \setminus \{1,2,3,4\}$ &0.964& $\{n=5\}$  & 0.4590  & 0.8261 & $[0.4590, \operatorname{dim}_H(\mathcal{A})]$ \\
\hline
\end{tabular}
\end{center}
\medskip 

\caption{\label{tab:main}The table summarizes how Corollary \ref{usefulcoro} is applied in each step of the iteration ($t_1,t_2, F$ and $\tilde{F}$ are as in that Corollary). In particular $t_1$ is an upper bound of $\dim_{H} (J_{\mathcal{A}_{\tilde{F}}})$ and $D(F)$ is a lower bound of $\dim_{H}(J_{\mathcal{A}_F})$. To be able to apply Corollary \ref{usefulcoro} we need to know that $t_2 \leq D(F)$. In steps 1 to 16, $K_F=K_{\mathcal{A}} =5.900319$. In steps 17 and 18, $K_F=K_{\mathcal{A}_{n>2}}=4.3655$, see Proposition \ref{distprop}.}
\end{table}
 \end{proof}
\begin{remark}
    The estimate for the size of the best distortion constant $K_\mathcal{A}$ plays a crucial role in our argument. A better estimate can lead to a reduced number of steps. However, the estimate we obtained in Proposition \ref{distprop} was sufficient for obtaining the first crucial step. One also may notice that the iterative argument makes rather small progress initially and then speeds up. We have also not tried to optimize the number of steps. There are many other choices for subsystems and it is likely that a different choice of subsystems would lead to the result faster.
\end{remark}

\begin{table}[!htb]
\begin{minipage}[t]{0.48\textwidth}
\begin{center}
\begin{tabular}{|c|c|}
\hline
subsystem of $\mathcal{A}$ & upper dim. estimate \\
\hline 
$\mathcal{A}_{n\leq 26}$      &  $1.3001$\\
$\mathcal{A}_{n\leq 25}$      &  $1.3000$\\
$\mathcal{A}_{\{n\leq 25\} \setminus \{11,12\}}$      &  $1.2950$\\
$\mathcal{A}_{\{n \leq 25\}\setminus \{11,\dots,15\}}$      &  $1.2901$\\
$\mathcal{A}_{\{n \leq 24\}\setminus \{11,\dots,20\}}$      &  $1.2850$\\
$\mathcal{A}_{\{n \leq 23\}\setminus \{7,\dots,10\}}$      &  $1.2775$\\
$\mathcal{A}_{\{n \leq 23\}\setminus \{6,\dots,9\}}$      &  $1.2700$\\
$\mathcal{A}_{\{n \leq 22\}\setminus \{6,\dots,11\}}$      &  $1.2618$\\
$\mathcal{A}_{\{n \leq 21\}\setminus \{5,\dots,9\}}$      &  $1.2508$\\
$\mathcal{A}_{\{n \leq 20\}\setminus \{5,\dots,12\}}$      &  $1.2383$\\
$\mathcal{A}_{\{n \leq 18\}\setminus \{4,\dots,8\}}$      &  $1.2240$\\
$\mathcal{A}_{\{n \leq 17\}\setminus \{3,4,5\}}$      &  $1.2053$\\
$\mathcal{A}_{\{n \leq 15\}\setminus \{3,4,5,6\}}$      &  $1.1851$\\
$\mathcal{A}_{\{n \leq 14\}\setminus \{2,3\}}$      &  $1.1560$\\
$\mathcal{A}_{\{n \leq 12\}\setminus \{2,3,4\}}$      &  $1.1080$\\
$\mathcal{A}_{\{n \leq 9\}\setminus \{1,5\}}$      &  $1.0360$\\
$\mathcal{A}_{\{n=3,4,5\}}$      &  $0.8261$\\
$\mathcal{A}_{\{n=5\}}$      &  $0.4581$\\
\hline
\end{tabular}
\end{center}
\end{minipage}
\hfill
\begin{tabular}
{|c|c|}
\hline
subsystem of $\mathcal{A}$ & lower dim. estimate \\
\hline 
$\mathcal{A}_{n\neq \{1,2,3,4\}}$      &  $0.964$\\
$\mathcal{A}_{n\neq \{1,2\}}$      &  $1.049$\\
$\mathcal{A}_{n\neq \{1,5\}}$      &  $1.110$\\
$\mathcal{A}_{n\neq \{2,3,4\}}$      &  $1.1561$\\
$\mathcal{A}_{n\neq \{2,3\}}$      &  $1.186$\\
$\mathcal{A}_{n\neq \{3,4,5,6\}}$      &  $1.211$\\
$\mathcal{A}_{n\neq \{3,4,5\}}$      &  $1.2248$\\
$\mathcal{A}_{n\neq \{4,5,6,7,8\}}$      &  $1.240$\\
$\mathcal{A}_{n\neq \{5,6,7,8,9,10,11,12\}}$      &  $1.251$\\
$\mathcal{A}_{n\neq \{4,5,6\}}$      &  $1.256$\\
$\mathcal{A}_{n\neq \{5,6,7,8,9\}}$      &  $1.262$\\
$\mathcal{A}_{n\neq \{6,7,8,9,10,11\}}$      &  $1.271$\\
$\mathcal{A}_{n\neq \{6,7,8,9,10\}}$      &  $1.274$\\
$\mathcal{A}_{n\neq \{6,7,8,9\}}$      &  $1.278$\\
$\mathcal{A}_{n\neq \{7,8,9,10\}}$      &  $1.285$\\
$\mathcal{A}_{n\neq \{11,\dots,20\}}$      &  $1.291$\\
$\mathcal{A}_{n\neq \{11,12,13,14,15\}}$      &  $1.296$\\
$\mathcal{A}_{n\neq \{11,12\}}$      &  $1.300$\\
\hline
\end{tabular} 
\medskip
\caption{\label{tab:updimest} Upper and lower bounds for the Hausdorff dimension of the subsystems of $\mathcal{A}$ used in the proof of Theorem \ref{mainthm}. }
\end{table}


\bibliography{biblio}
\bibliographystyle{siam}

\end{document}